\numberwithin{equation}{section}
\def\H{\mathcal H}
\def\R{\mathbb R}
\newcommand{\zero}{\mathbf{0}}
\newcommand{\dist}{\mathop{\mathrm{dist}}}
\def\e{\varepsilon}
\def\S{{\partial\Omega}}
\def\vphi{\varphi}
\def\Div{{\rm div}\,}
\def\om{\omega}
\def\l{\lambda}
\def\g{\gamma}
\def\Om{\Omega}
\def\de{\delta}
\def\Id{{\rm Id}}
\def\pa{\partial}
\def\00{{\bf 0}}
\def\SS{\mathbb S}
\def\E{\mathcal{E}}
\renewcommand{\a}{\alpha}
\renewcommand{\b}{\beta}
\newcommand{\D}{\Delta}
\renewcommand{\l}{\lambda}
\renewcommand{\L}{\Lambda}
\newcommand{\n}{\nabla}
\newcommand{\var}{\varphi}
\renewcommand{\om}{\omega}
\renewcommand{\Div}{{\rm div \,}}
\newcommand{\diam}{\mathrm{diam}}
\def\bb{\mathbf{b}}
\def\C{\mathbf{C}}
\def\D{\mathbf{D}}
\def\B{\mathcal{B}}
\newtheorem*{theorem*}{Theorem}
\newtheorem{theorem}{Theorem}[section]
\newtheorem{lemma}[theorem]{Lemma}
\newtheorem{proposition}[theorem]{Proposition}
\newtheorem{remark}[theorem]{Remark}
\title[Sharp stability for the nonlocal Alexandrov theorem]{Rigidity and sharp stability estimates  
for \\
hypersurfaces with constant and almost-constant \\ nonlocal mean curvature}
\author{G. Ciraolo}
\address{ Dipartimento di Matematica e Applicazioni,
Università di Palermo, Via Archirafi 34, 90123 Palermo, Italy}
\email{giulio.ciraolo@unipa.it}
\author{A. Figalli}
\address{Mathematics Department, The University of Texas at Austin,
2515 Speedway Stop C1200, RLM 8.100 Austin, TX 78712, USA}
\email{figalli@math.utexas.edu}
\author{F. Maggi}
\address{Mathematics Department, The University of Texas at Austin,
2515 Speedway Stop C1200, RLM 8.100 Austin, TX 78712, USA}
\email{maggi@math.utexas.edu}
\author{M. Novaga}
\address{Dipartimento di Matematica,
Universit\`a di Pisa,
Largo Bruno Pontecorvo 5,
56127 Pisa, Italy}
\email{novaga@dm.unipi.it}
\begin{document}

\begin{abstract}
We prove that the boundary of a (not necessarily connected) bounded smooth set with constant nonlocal mean curvature is a sphere. More generally, and in contrast with what happens in the classical case, 
we show that the Lipschitz constant of the nonlocal mean curvature of such a boundary controls 
its $C^2$-distance from a single sphere. The corresponding stability inequality is obtained with a sharp decay rate.
\end{abstract}

\maketitle

\section{Introduction} \label{sec:intro} The aim of this paper is to address a classical question in Differential Geometry, namely the characterization of compact embedded constant mean curvature surfaces as spheres -- Alexandrov's theorem \cite{A} -- in the case of surfaces with constant {\it nonlocal} mean curvature. The seminal papers \cite{CRS,CS} have drawn an increasing attention
to the geometry of nonlocal minimal surfaces, i.e., boundaries of sets $\Om\subset\R^n$ which are stationary for the $s$-perimeter functional
\[
P_s(\Om)=\int_\Om\int_{\Om^c}\frac{dx\,dy}{|x-y|^{n+2\,s}}\,,\qquad \Om^c = \R^n \setminus \Om\,,
\]
corresponding to some value of $s\in(0,1/2)$ (see for instance \cite{CV,ADM,SV,BFV,FV,DDW,F2M3,DDDV}). If $\Om$ is an open set with smooth boundary and $A\subset\R^n$ is an open set, then the condition
\[
\de P_s(\Om)[X]=\frac{d}{dt}\,P_s(\Phi_t(\Om))=0\,,\qquad\forall \,X\in C^\infty_c(A;\R^n)\,,
\]
(where $\Phi_t$ denotes the flux defined by the vector-field $X$) is equivalent to require the vanishing of the nonlocal mean curvature $H_s^\Om (p)$ of $\Om$ at every point $p\in A\cap \pa \Om$.
More in general, we say that $H_s^\Om:\pa \Om\cap A \to \R$ is the nonlocal mean curvature of $\pa \Om$
inside $A$ if
$$
\frac{d}{dt}\,P_s(\Phi_t(\Om))=\int_{\pa \Om}H_s^\Om(x)\,X(x)\cdot \nu_x\, d\H^{n-1}_x \qquad\forall \,X\in C^\infty_c(A;\R^n)\,.
$$
Here $\nu_x$ is the exterior unit normal to $\Om$ at $x\in \pa\Om$,
and $\H^{n-1}$ denotes the $(n-1)$-Hausdorff measure.

Whenever $\pa \Om$ is sufficiently smooth (say $\pa\Om \in C^{1,\a}$ for some $\a>2s$),
one can show that the nonlocal mean curvature of $\pa \Om$ at a point $p \in \pa \Om$ is given by
\begin{equation} \label{def:Hs}
H_s^\Om (p) = \frac{1}{\om_{n-2}}
\int_{\R^n} \frac{\widetilde\chi_\Om(x)}{|x-p|^{n+2s}} \,dx \,,\qquad \widetilde\chi_\Om(x)=\chi_{\Om^c}(x)-\chi_\Om(x)\,,
\end{equation}
where
$\chi_E$ denotes the characteristic function of a set $E$, $\om_{n-2}$ is the measure of the $(n-2)$-dimensional sphere, and the integral is defined in the principal value sense (see for instance \cite[Theorem 6.1 and Proposition 6.3]{F2M3}).
It is useful to keep in mind that, by means of the divergence theorem, the nonlocal mean curvature can also be computed as a boundary integral, that is
 \begin{equation} \label{def:Hsdiv}
 H_s^\Om (p) = \frac{1}{s\, \om_{n-2}} \int_{\pa \Om} \frac{(x-p)\cdot \nu_x}{|x-p|^{n+2s}}\, d\H^{n-1}_x \,.
 \end{equation}

Our main interest here is describing the shape of open sets $\Om$ having constant, or almost-constant, nonlocal mean curvature. In this direction we obtain three main results. 

The first one is a nonlocal version of the classical Alexandrov's 
theorem \cite{A}:

\begin{theorem}\label{thm rigidity}
If $\Om$ is a bounded open set of class $C^{1,2s}$ and $H_s^\Om$ is constant on $\pa\Om$, then $\pa\Om$ is a sphere.
\end{theorem}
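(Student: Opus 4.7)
The approach I would take is Alexandrov's moving planes method, adapted to the nonlocal setting. The key analytic ingredient is essentially built into the integral representation \eqref{def:Hs}: if $E\subset F$ are bounded $C^{1,2s}$ sets sharing a boundary point $p$, then
\[
H_s^E(p)-H_s^F(p)=\frac{2}{\om_{n-2}}\int_{\R^n}\frac{\chi_F(x)-\chi_E(x)}{|x-p|^{n+2s}}\,dx\ge 0,
\]
with equality if and only if $E=F$. This nonlocal strong comparison principle is strictly stronger than its local counterpart and is what lets us dispense with any connectedness assumption on $\Om$.

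After fixing a direction $e\in\SS^{n-1}$, I would introduce the hyperplanes $T_\l=\{x\cdot e=\l\}$, the reflection $R_\l$ across $T_\l$, and the reflected caps $\Om^\l=R_\l(\Om\cap\{x\cdot e>\l\})$. For $\l$ sufficiently large, $\Om^\l=\emptyset\subset\Om\cap\{x\cdot e<\l\}$, and I would let $\l^*$ be the smallest value for which the inclusion $\Om^\l\subset\Om\cap\{x\cdot e<\l\}$ still persists. By the usual alternative for this construction, at $\l=\l^*$ one of two situations must occur: either (a) there is an interior touching point $p\in\pa\Om\cap\pa\Om^{\l^*}$ with $p\notin T_{\l^*}$, or (b) $T_{\l^*}$ is tangent to $\pa\Om$ at some point of $T_{\l^*}$.

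In case (a), I would reflect the half-space piece of \eqref{def:Hs} to obtain the identity
\[
H_s^\Om(p)-H_s^\Om(R_{\l^*}(p))=\frac{1}{\om_{n-2}}\int_{\{x\cdot e>\l^*\}}\bigl[\widetilde\chi_\Om(x)-\widetilde\chi_\Om(R_{\l^*}(x))\bigr]\!\left(\frac{1}{|x-p|^{n+2s}}-\frac{1}{|x-R_{\l^*}(p)|^{n+2s}}\right)dx.
\]
The inclusion $\Om^{\l^*}\subset\Om\cap\{x\cdot e<\l^*\}$ forces the bracket to be $\ge 0$ on $\{x\cdot e>\l^*\}$, while the kernel factor is strictly negative there; the constancy of $H_s^\Om$ makes the left-hand side vanish, so the integrand must vanish a.e., and hence $\Om$ is symmetric across $T_{\l^*}$. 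Running this argument for every $e\in\SS^{n-1}$ produces a symmetry hyperplane perpendicular to each direction. All such hyperplanes must contain the center of mass of $\Om$, and a standard rigidity argument (invariance under every reflection through a common point) then forces $\Om$ to be a ball.

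The main obstacle I expect is case (b): when the tangency takes place at a point of $T_{\l^*}$ we have $p=R_{\l^*}(p)$, and the identity above collapses trivially to $0=0$. Handling this case requires a nonlocal Hopf-type boundary lemma, which I would try to obtain either by differentiating an analogous identity in $\l$ at $\l^*$, or by a quantitative one-sided comparison along the family $\{\Om^\l\}$. Making this argument rigorous under the sharp regularity hypothesis $C^{1,2s}$---exactly what is needed for \eqref{def:Hs} to be defined pointwise---is where I expect most of the technical effort to go.
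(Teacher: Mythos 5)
Your approach is essentially the same as the paper's: moving planes applied to the integral representation of $H_s^\Om$, with the two standard cases at the critical plane. In case (a) your reflection identity and the sign argument are exactly what the paper uses. However, there are two genuine gaps in the proposal.

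First, case (b) is only sketched, and it is not a minor technicality: it is the nonlocal Hopf lemma that makes the whole proof work. The paper's resolution is to differentiate $H_s^\Om$ tangentially along $\pa\Om$ at the orthogonal-touching point $q\in\pi_{\l}$. Constancy gives $\n H_s^\Om(q)\cdot e=0$, and a computation using the oddness of $\n\vphi_\e(|x-q|)\cdot e$ with respect to $\pi_\l$ together with the inclusion $\Om'_\l\subset\Om$ shows that this derivative equals a sign-definite integral of $-\frac{(x-q)\cdot e}{|x-q|^{n+2s+2}}$ over $\Om\setminus\Om'$, so $|\Om\setminus\Om'|=0$. To even make sense of $\n H_s^\Om$ on $\pa\Om$ one needs $H_s^\Om\in C^1(\pa\Om)$, which the paper establishes in a separate lemma (via a smooth-kernel approximation argument) assuming $C^{2,\a}$-regularity of $\pa\Om$. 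This in turn means the proof cannot be run directly under the stated $C^{1,2s}$ hypothesis; the paper first invokes the bootstrap regularity theory of \cite{BFV} to upgrade $C^{1,2s}$ boundaries with constant $H_s^\Om$ to $C^\infty$. You flag the $C^{1,2s}$ threshold as a concern, but the resolution is this bootstrap, not a direct argument at that low regularity.

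Second, your conclusion from the symmetry step is incorrect as stated. Invariance of $\Om$ under reflection across a hyperplane through a common point for every direction only implies that $\Om$ is rotationally symmetric about that point, i.e.\ that $\pa\Om$ is a union of concentric spheres; it does \emph{not} by itself force a single sphere (an annulus is invariant under all such reflections). The paper closes this gap by running moving planes once more: if $\pa\Om$ had two or more concentric components, the critical hyperplane for a generic direction would lie strictly off the center $\bb$ (an inner sphere forces the plane to stop early), yet the symmetry result forces that critical plane to be a plane of symmetry of $\Om$, which is impossible for a rotationally symmetric set whose only planes of symmetry pass through $\bb$. Without this final step the proof is incomplete.

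One small point of language: your case (b) describes the critical plane as \emph{tangent} to $\pa\Om$; the correct dichotomy is that $\pi_\l$ is \emph{orthogonal} to $\pa\Om$ at some $q\in\pa\Om\cap\pi_\l$ (i.e.\ $e$ is tangent to $\pa\Om$ at $q$).
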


In our second result we prove that if $H_s^\Om$, instead of being constant, has just a small Lipschitz constant
\begin{equation}
\label{def:Alexandrov_deficit}
\de_s(\Om) = \sup_{p,q \in \S,\, p\ne q}  \frac{|H_s^\Om (p) - H_s^\Om (q)|}{|p-q|}\,,
\end{equation}
and if $\pa\Om$ is of class $C^{2,\a}$ for some $\a>2s$, then $\pa\Om$ is close to a sphere,
with a {\em sharp} estimate in terms of $\delta_s(\Om)$. To state our result we introduce the following uniform distance from being a ball:
\[
\rho(\Om)=\inf\Big\{\frac{t-s}{\diam(\Om)}:p\in\Om\,,B_s(p)\subset\Omega\subset B_t(p)\Big\}\,.
\]

\begin{theorem} \label{thm:stability}
If $\Om$ is a bounded open set with $C^{2,\a}$-boundary for some $\a>2s$, then
there exists a dimensional constant $\hat C(n)$ such that
\begin{equation}
  \label{stima stab}
\rho(\Om)\le \hat C(n)\,\eta_s(\Om)\,,
\end{equation}
where
\begin{equation}
\label{def:eta}
\eta_s(\Om)=\frac{\diam(\Om)^{2n+2s+1}}{|\Om|^2}\,\de_s(\Om)\,.
\end{equation}
Moreover there exists $\eta(n)>0$ such that if $\eta_s(\Om)\le\eta(n)$ then, up to rescaling $\Om$, we can find a bi-Lipschitz map $F:\pa B_1(\zero)\to\pa\Om$ satisfying
\begin{equation}
  \label{lipschitz map}
\Bigl(1-\bar C(n)\,\sqrt{\eta_s(\Om)}\Bigr)|x-y|\le |F(x)-F(y)|\le \Bigl(1+\bar C(n)\,\sqrt{\eta_s(\Om)}\Bigr)|x-y|\qquad\forall \,x,y\in\pa B_1(\zero)
\end{equation}
for some dimensional constant $\bar C(n)>0$.
\end{theorem}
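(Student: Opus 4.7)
The plan is to adapt Alexandrov's moving planes method to the nonlocal setting in a quantitative way, obtaining Theorem \ref{thm rigidity} en route as the case $\de_s(\Om)=0$. Fix $\om\in\SS^{n-1}$ and, for $\lambda\in\R$, let $\Om^\om_\lambda$ denote the reflection of $\Om\cap\{x\cdot\om>\lambda\}$ across $T^\om_\lambda=\{x\cdot\om=\lambda\}$. Sliding $\lambda$ down from $+\infty$, one reaches a critical value $\lambda^*=\lambda^*_\om$ at which either (a) $\pa\Om^\om_{\lambda^*}$ touches $\pa\Om$ at an interior point $p$ with $p\cdot\om<\lambda^*$, or (b) $T^\om_{\lambda^*}$ is orthogonal to $\pa\Om$ at some point $p\in T^\om_{\lambda^*}\cap\pa\Om$. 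In case (a), letting $\tilde p$ be the reflection of $p$, both $p,\tilde p\in\pa\Om$, and a direct manipulation of \eqref{def:Hs} (using that the reflection maps $\Om^\om_{\lambda^*}$ to $\Om\cap\{x\cdot\om>\lambda^*\}$ and sends $p$ to $\tilde p$) gives the fundamental identity
\begin{equation*}
H_s^\Om(\tilde p)-H_s^\Om(p)=\frac{2}{\om_{n-2}}\int_D\Big(\frac{1}{|y-p|^{n+2s}}-\frac{1}{|y-\tilde p|^{n+2s}}\Big)\,dy\,,\qquad D=\bigl(\Om\cap\{x\cdot\om<\lambda^*\}\bigr)\setminus\Om^\om_{\lambda^*}\,.
\end{equation*}
Since $|y-p|<|y-\tilde p|$ for $y$ in the lower half-space, the integrand is strictly positive on $D$, so constancy of $H_s^\Om$ forces $D=\emptyset$; carrying this out for every $\om$ (with a parallel boundary analysis covering case (b)) proves Theorem \ref{thm rigidity}.

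For the stability estimate, the Lipschitz hypothesis gives $H_s^\Om(\tilde p)-H_s^\Om(p)\le\de_s(\Om)\,|\tilde p-p|$ on one side, while the algebraic identity $|y-\tilde p|^2-|y-p|^2=2|\tilde p-p|(\lambda^*-y\cdot\om)$ combined with the mean value theorem gives, on the other side, a pointwise lower bound of the integrand of order $|\tilde p-p|(\lambda^*-y\cdot\om)/\diam(\Om)^{n+2s+2}$. A careful analysis of the ``bulk'' of $D$---which uses the a priori $C^{2,\a}$ bounds to prevent $D$ from concentrating in a thin layer adjacent to $T^\om_{\lambda^*}$---then yields, after cancelling the common factor $|\tilde p-p|$, a \emph{linear} bound
\begin{equation*}
|D|\le C(n)\,\diam(\Om)^{n+2s+1}\,\de_s(\Om)\qquad\forall\,\om\in\SS^{n-1}.
\end{equation*}
Choosing $p_0$ to be (for instance) the barycenter of $\Om$ and comparing each $T^\om_{\lambda^*_\om}$ to the parallel hyperplane through $p_0$, the two are at distance at most (symmetric difference)/(minimal cross-section area) $\lesssim |D|\,\diam(\Om)/|\Om|$. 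Converting this support-function oscillation into a radial bound $R-r$ with $B_r(p_0)\subset\Om\subset B_R(p_0)$, by the standard reconstruction argument for nearly spherical star-shaped sets (which introduces one further factor of $\diam(\Om)^n/|\Om|$ through an isoperimetric comparison), yields exactly \eqref{stima stab} with the scaling $\eta_s(\Om)=\diam(\Om)^{2n+2s+1}|\Om|^{-2}\de_s(\Om)$. The linearity in $\de_s(\Om)$, inherited directly from the touching-point identity, is what makes the bound sharp: a weaker $L^p$-oscillation hypothesis on $H_s^\Om$ would yield only a fractional power.

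Once $\Om$ has been rescaled so that $B_{1-\varepsilon}(\zero)\subset\Om\subset B_{1+\varepsilon}(\zero)$ with $\varepsilon\le\hat C(n)\,\eta_s(\Om)$, the bi-Lipschitz map $F\colon\pa B_1(\zero)\to\pa\Om$ is defined by radial projection: write $\pa\Om=\{u(x)\,x:x\in\pa B_1(\zero)\}$ with $|u-1|\le\varepsilon$, then use the interpolation inequality $\|\n_\tau u\|_\infty\le C\|u-1\|_\infty^{1/2}\|u\|_{C^{2,\a}}^{1/2}$ together with the a priori $C^{2,\a}$ bound on $\pa\Om$ to get $\|\n_\tau u\|_\infty\le\bar C(n)\sqrt{\eta_s(\Om)}$; \eqref{lipschitz map} then follows by elementary trigonometry on $\pa B_1(\zero)$. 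The main obstacle is the quantitative touching-point analysis: in the boundary case (b) one has $p=\tilde p$, so the identity above vanishes to leading order and must be replaced by a second-order expansion near $T^\om_{\lambda^*}$ (the nonlocal analogue of Serrin's boundary-point lemma); more seriously, producing the linear bound on $|D|$ requires ruling out degenerate configurations in which the touching is a ``glancing'' tangency with $D$ confined to a very thin region near $T^\om_{\lambda^*}$, which is where the $C^{2,\a}$ regularity and the a priori geometric quantities encoded in $\eta_s(\Om)$ enter essentially.
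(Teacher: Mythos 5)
Your broad strategy (moving planes, the touching-point identity, and locating all critical hyperplanes near a common point) matches the paper, but the proposal has a genuine gap in the step where sharpness is achieved, and the gap is not fixable in the way you suggest.

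The weighted estimate you derive, namely $\int_{\Om\triangle\Om'}\dist(x,\pi_\l)\,dx\lesssim\diam(\Om)^{n+2s+2}\,\de_s(\Om)$, is correct and is exactly \eqref{bound_deficit_one_direct} in the paper. However, your next claim -- an \emph{unweighted} linear bound $|D|\le C(n)\diam(\Om)^{n+2s+1}\de_s(\Om)$ on the symmetric difference itself, obtained by ``a careful analysis of the bulk of $D$ using the a priori $C^{2,\a}$ bounds to prevent concentration in a thin layer'' -- is not justified, and it is not what the paper proves. A weighted $L^1$ bound of order $\de_s$ controls the unweighted volume only up to a $\sqrt{\de_s}$ power, since the weight degenerates on the critical hyperplane; Chebyshev and optimization over the cut-off $\gamma$ gives precisely $|\Om\triangle\Om'|\lesssim\sqrt{\de_s(\Om)}$ (Proposition~\ref{prop:1direction}(a)), and the paper never improves this to linear in $\de_s$. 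Moreover, invoking $C^{2,\a}$ a priori bounds here is problematic: the theorem asserts a \emph{dimensional} constant $\hat C(n)$, and no quantitative $C^{2,\a}$ bound on $\pa\Om$ is available at this stage (such bounds are only produced later, in Theorem~\ref{thm:stability2}, whose proof relies on Theorem~\ref{thm:stability}); letting them into the estimate would be circular. The paper achieves the sharp linear rate in $\de_s(\Om)$ by a different mechanism: a \emph{bootstrap}. First, the $\sqrt{\de_s}$ bound on $|\Om\triangle\Om'|$ is fed through Lemma~\ref{lemma direzioni}(a) to conclude $R-r\lesssim\sqrt{\de_s}$, so $\Om$ lies in a thin annulus; then one observes that, when $\Om\subset B_R\setminus B_r$ and the critical plane is near the center, the Chebyshev tail $|\{x\in\Om\triangle\Om':\dist(x,\pi_\l)\le\gamma\}|$ is bounded by $C(n)\gamma(R-r)$ rather than $C\gamma\diam^{n-1}$, giving the improved estimate $|\Om\triangle\Om'|\lesssim\sqrt{\de_s}\sqrt{R-r}$ (Proposition~\ref{prop:1direction}(b) and Lemma~\ref{lemma direzioni}(b)); feeding this back via \eqref{eq:Rr lambda} gives $R-r\lesssim\sqrt{\de_s}\sqrt{R-r}$, hence $R-r\lesssim\de_s$. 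This self-improvement is the heart of the sharpness and is missing from your argument.

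The same objection applies to your derivation of the Lipschitz bound \eqref{lipschitz map}: the interpolation inequality $\|\n_\tau u\|_\infty\le C\|u-1\|_\infty^{1/2}\|u\|_{C^{2,\a}}^{1/2}$ requires a quantitative $C^{2,\a}$ bound on the radial graph, which you do not have (and cannot have with dimensional constants) at this stage. The paper instead proves \eqref{lipschitz map} by a purely geometric cone-condition argument using only the bound on $|\l_e|$ from Lemma~\ref{lemma direzioni}: if a segment from $x\in\pa\Om$ to a point $y\in\pa B_{1-M\eta_s}(\zero)$ exited $\Om$, the critical plane in the direction $(x-y)/|x-y|$ would have to sit too far from the origin, contradicting \eqref{dist_zero_pi_small}. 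This yields an interior cone of opening $\pi-C\sqrt{\eta_s(\Om)}$ at every boundary point with a dimensional constant, and hence the stated bi-Lipschitz control.
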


\begin{remark}
  {\rm Note that both $\rho(\Om)$ and $\eta_s(\Om)$ are scaling invariant quantities.
  Also, the estimate \eqref{stima stab} is {\em optimal} in terms of the exponent of $\eta_s(\Om)$, as it can be easily seen by considering
  a sequence of ellipsoids converging to
the unit ball.}
\end{remark}

\begin{remark}
\label{rmk:stability local}
  {\rm If $\Om$ is an open set with $C^2$-boundary then $(1-2\,s)\,H_s^\Om\to H^\Om$ on $\pa\Om$ as $s\to(1/2)^-$, where $H^\Om$ is the classical mean curvature of $\pa\Om$ (see \cite{AV}). Therefore,
  because of the scaling factor $(1-2s)$ one cannot obtain any information from Theorem \ref{thm:stability} in the limit $s\to(1/2)^-$. This is not a drawback of our result, as its local analog is false. Indeed, one can construct examples of connected boundaries whose classical mean curvature is arbitrarily close to a constant in $C^1$ topology, but these sets are close (in the Hausdorff distance) to a union of tangent spheres of equal radii \cite{B}. }
\end{remark}

Both results above are obtained by the moving planes method. Note that the use of this method in obtaining stability estimates is well-established in the local case, see for example \cite{ABR,CMS2,CMV2} in relation to Serrin's overdetermined problem and \cite{CV} concerning Alexandrov's theorem.
Also, this method has already been successfully used in some nonlocal settings
to obtain symmetry results 
 (see for instance \cite{Rei,BMS} and the references therein).\\

Once Theorem \ref{thm:stability} is proved, we can exploit the regularity theory for nonlocal equations in order to obtain a sharp stability estimate in stronger norms.
Indeed, by a careful analysis we can conclude that $\pa\Om$ is close in $C^2$ to a sphere with a {\em linear} control in terms of $\eta_s(\Om)$, exactly as in \eqref{stima stab}. In particular the following result improves
the estimate in \eqref{lipschitz map}, although its proof relies on more delicate tools
(and actually \eqref{lipschitz map} is needed in the proof of this result).

\begin{theorem} \label{thm:stability2}
Assume that $\Om$ is a bounded open set with $C^{2,\a}$-boundary for some $\a>2s$,
and suppose that $\Om$ has been translated and rescaled so that
\begin{equation}
\label{eq:scale omega}
B_{1-2\rho(\Om)}(\zero)\subset \Om\subset B_{1}(\zero).
\end{equation}
There exists $\eta(n,s)>0$ such that the following holds: If $\eta_s(\Om)\le\eta(n,s)$ 
then  there is a map
$F: \pa B_1(\zero)\to \R^n$ of class $C^{2,\tau}$ for any $\tau<2s$, such that $F(\pa B_1(\zero))=\pa \Om$ and
$$
\|F-\Id\|_{C^{2,\tau}(\pa B_1(\zero))}\le C(n,s,\tau)\,\eta_s(\Om).
$$
In particular, if $\eta_s(\Om)$ is sufficiently small then $\Om$ is a convex domain.
\end{theorem}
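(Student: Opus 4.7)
The plan is to parametrize $\pa\Om$ as a small radial graph over $\pa B_1(\zero)$, interpret the hypothesis on $H_s^\Om$ as a nonlinear nonlocal elliptic equation on the sphere, and bootstrap the Lipschitz estimate of Theorem~\ref{thm:stability} to a $C^{2,\tau}$ estimate via Schauder theory for integro-differential operators, all while preserving a \emph{linear} dependence on $\eta_s(\Om)$. Under the normalization~\eqref{eq:scale omega}, $\pa\Om$ is a radial graph $\pa\Om=\{(1+u(\theta))\,\theta:\theta\in\pa B_1(\zero)\}$ with $u\in C^{2,\a}(\pa B_1)$, and combining $B_{1-2\rho(\Om)}(\zero)\subset\Om\subset B_1(\zero)$ with Theorem~\ref{thm:stability} yields the crucial \emph{linear} $L^\infty$-bound
\[
\|u\|_{L^\infty(\pa B_1)}\le 2\,\rho(\Om)\le 2\,\hat C(n)\,\eta_s(\Om)\,.
\]
Starting the bootstrap from the square-root bi-Lipschitz bound~\eqref{lipschitz map} would only produce the suboptimal rate $\sqrt{\eta_s(\Om)}$, so this linear $L^\infty$-smallness is central.

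\textbf{The nonlocal elliptic equation on the sphere.} Plugging the graph into~\eqref{def:Hsdiv} gives a nonlinear operator $u\mapsto\mathcal{H}[u]$ on $\pa B_1$ with $\mathcal{H}[0]=H_s^{B_1}$. Its Fr\'echet derivative $\Le_s$ at $u\equiv 0$ is a rotation-invariant pseudodifferential operator of order $1+2s$, elliptic modulo the finite-dimensional kernel spanned by $\theta\mapsto e\cdot\theta$ (infinitesimal translations of $B_1$). Writing $\mathcal{H}[u]=H_s^{B_1}+\Le_s u+\mathcal{Q}[u]$, the remainder is quadratic in H\"older norms: $\|\mathcal{Q}[u]\|_{C^{1-2s+\tau}}\le C(n,s)\,\|u\|_{C^{1,\tau}}\,\|u\|_{C^{2,\tau}}$ for $\tau\in(0,2s)$. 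Letting $\bar H$ be the mean of $H_s^\Om\circ F$ over $\pa B_1$, the hypothesis~\eqref{def:Alexandrov_deficit} and $\diam(\Om)\le 2$ give
\[
\|H_s^\Om\circ F-\bar H\|_{C^{0,1}(\pa B_1)}\le C(n)\,\de_s(\Om)\le C(n)\,\eta_s(\Om)\,,
\]
and the equation becomes $\Le_s u=(H_s^\Om\circ F-\bar H)+(\bar H-H_s^{B_1})-\mathcal{Q}[u]$; the constant $\bar H-H_s^{B_1}$ is absorbed by a small adjustment of the radius of the reference ball.

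\textbf{Schauder bootstrap and conclusion.} Schauder estimates for integro-differential operators of order $1+2s$ (Caffarelli--Silvestre, Serra, Ros-Oton--Serra), transferred to $\pa B_1$, give for any $\tau\in(0,2s)$ and $v\perp\ker\Le_s$ the bound
\[
\|v\|_{C^{2,\tau}(\pa B_1)}\le C(n,s,\tau)\bigl(\|\Le_s v\|_{C^{1-2s+\tau}(\pa B_1)}+\|v\|_{L^\infty(\pa B_1)}\bigr)\,.
\]
The translational component of $u$ is fixed by the centering in~\eqref{eq:scale omega}, so $u$ may be treated as orthogonal to $\ker\Le_s$ up to negligible corrections. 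Applying the estimate and using the quadratic bound on $\mathcal{Q}[u]$ yields $\|u\|_{C^{2,\tau}}\le C\bigl(\eta_s(\Om)+\|u\|_{C^{1,\tau}}\,\|u\|_{C^{2,\tau}}\bigr)$, and interpolation between the linear $L^\infty$-bound on $u$ and its a priori $C^{2,\a}$-smoothness shows that $\|u\|_{C^{1,\tau}}\to 0$ as $\eta_s(\Om)\to 0$. Hence for $\eta_s(\Om)\le\eta(n,s)$ small, the quadratic term can be absorbed, giving $\|u\|_{C^{2,\tau}}\le C(n,s,\tau)\,\eta_s(\Om)$. Finally, this $C^2$-estimate forces all principal curvatures of $\pa\Om$ to lie within $O(\eta_s(\Om))$ of $1$, proving convexity.

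\textbf{Main obstacle.} The principal difficulty is achieving \emph{linear} rather than square-root dependence on $\eta_s(\Om)$. This rests on two non-trivial inputs: (i) one must exploit the linear $L^\infty$-bound from $\rho(\Om)\lesssim\eta_s(\Om)$ instead of the weaker bi-Lipschitz rate~\eqref{lipschitz map}; and (ii) Schauder regularity on $\pa B_1$ for the linearized operator $\Le_s$ must be set up with careful quotienting of its translational kernel. Once these are in place, the quadratic structure of $\mathcal{Q}[u]$ combined with interpolation closes the argument.
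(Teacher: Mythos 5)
Your overall strategy -- parametrize $\pa\Om$ as a radial graph $u$ over $\pa B_1(\zero)$, exploit the linear $L^\infty$ bound $\|u\|_{L^\infty}\le 2\rho(\Om)\lesssim\eta_s(\Om)$ coming from \eqref{stima stab} and \eqref{eq:scale omega}, regard the nonlocal mean curvature equation as a nonlocal elliptic equation on $\pa B_1$ of order $1+2s$, and invoke Schauder theory to upgrade to $C^{2,\tau}$ with linear dependence on $\eta_s(\Om)$ -- is precisely the strategy of the paper's Step~2, and your emphasis on item (i) in your ``Main obstacle'' paragraph is well-placed. However, there is a genuine gap in how you propose to absorb the quadratic remainder.

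You write that ``interpolation between the linear $L^\infty$-bound on $u$ and its a priori $C^{2,\a}$-smoothness shows that $\|u\|_{C^{1,\tau}}\to 0$ as $\eta_s(\Om)\to 0$.'' This is the step that fails. The hypothesis of the theorem only says $\pa\Om\in C^{2,\a}$; it gives you no \emph{quantitative} bound on $\|u\|_{C^{2,\a}}$ independent of $\Om$. An interpolation inequality of the schematic form $\|u\|_{C^{1,\tau}}\le\|u\|_{L^\infty}^{\theta}\|u\|_{C^{2,\a}}^{1-\theta}$ is useless if $\|u\|_{C^{2,\a}}$ can blow up as $\eta_s(\Om)\to 0$; your Schauder estimate then produces $\|u\|_{C^{2,\tau}}\le C(\eta_s(\Om)+\|u\|_{C^{1,\tau}}\|u\|_{C^{2,\tau}})$ but you cannot absorb the last term. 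For the same reason, the claim that your kernel (or, in the paper's formulation, the perturbation $G_f$ of the fractional-Laplacian kernel) is a \emph{small} $C^{1,\tau}$ perturbation also presupposes a uniform $C^{2,\tau}$ bound on $u$ that you have not derived. The paper resolves this with a distinct preliminary step (its Step~1): it combines the uniform Lipschitz flatness from \eqref{lipschitz map}, the resulting density estimates, and the $L^\infty$ bound $\|H_s^\Om-H_s^{B_1}\|_{L^\infty}\lesssim\eta_s(\Om)$ to invoke the Caputo--Guillen improvement-of-flatness criterion, obtaining a \emph{uniform} $C^{1,\b}$ bound, and then the Barrios--Figalli--Valdinoci bootstrap to get a uniform $C^{2,\tau}$ bound $\|u\|_{C^{2,\tau}}\le C(n,s,\tau)$ valid whenever $\eta_s(\Om)$ is small. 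Only with this in hand can the interpolation argument (and the smallness of the kernel perturbation) proceed. Your proposal, as written, jumps directly to the Schauder bootstrap without this indispensable a~priori compactness, so the absorption of the quadratic term is unjustified. A secondary remark: your insistence on restricting to $v\perp\ker\Le_s$ is unnecessary, since the Schauder estimate you quote already carries $\|v\|_{L^\infty}$ on the right-hand side; the kernel only matters if you want to drop that lower-order term, which you do not, because $\|u\|_{L^\infty}$ is already linearly small. The paper avoids the linearization-plus-kernel discussion entirely by writing the full nonlinear equation as a fractional-Laplacian-type operator with an $f$-dependent kernel applied to $f$ plus a small source $g$, and applies \cite[Theorem 61]{CSi} directly.
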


\medskip

We conclude this introduction by emphasizing  that
boundaries with constant or almost constant mean curvature behave differently in the nonlocal and in the local case, the former setting being much more rigid than the latter. Indeed, as proven in Theorem \ref{thm rigidity},
even without any connectedness assumption
a boundary with constant nonlocal mean curvature is a {\it single} sphere, whereas  of course any disjoint union of spheres with equal radii has constant mean curvature in the classical sense.
Actually, even working only with connected boundaries, a significant difference arises at the level of stability.
Indeed, as already mentioned in Remark \ref{rmk:stability local}, a connected boundary with almost-constant mean curvature may be close to a compound of nearby spheres of equal radii 
(unless one imposes some strong geometric constraints on the considered set, like a uniform ball condition \cite{CV} or an upper volume density bound \cite{CM}).
In contrast with this picture, as shown in Theorems \ref{thm:stability} and \ref{thm:stability2} above, uniformly bounded sets with almost-constant nonlocal mean curvature must be close to a single ball without the need to any uniform control in their geometry. This points out an interesting feature of the nonlocal case, namely, the nonlocality of the underlying perimeter functional prevents bubbling phenomena (in the limit $\de_s(\Om)\to 0$).

We also note that, as it will be apparent from our arguments, Theorems \ref{thm rigidity} and \ref{thm:stability} hold (with different constants and possibly without scale invariant statements) if in the definition of $H_s^\Om$ one replaces the kernel $|z|^{-n-2s}$ with $k(|z|)$, where
\[
k(t)>0\,,\qquad t^{n+2s}k(t)+t^{n+2s+1}|k'(t)|\le C\,,\qquad \sup_{(0,t)}k'\le c(t)<0\,,\qquad\forall\, t>0\,.
\]
For the validity of Theorem \ref{thm:stability2}, one needs to impose the additional constraint that $k(t)$ behaves as a smooth perturbation of $t^{-(n+2s)}$ as $t\to 0^+$.
\\

This paper is organized as follows. In section \ref{sec:prelim} we prove a technical fact about approximating the nonlocal mean curvature in $C^1$
with nonlocal ``curvatures'' coming from smooth kernels. Then in section \ref{sec:symmetry} we prove the nonlocal version of Alexandrov's theorem, while in section \ref{sec:stab} we address the stability analysis.
\\

After the writing of this paper was completed we learned that, at the very same time
and independently of us,
X. Cabr\'e, M. Fall, J. Sola-Morales,
and
T. Weth have proved a result analogous to our Theorem \ref{thm rigidity} \cite{CFSW}.\\


\noindent {\bf Acknowledgment:}
This work has been done while GC and MN
where visiting the University of Texas at Austin, under the support of NSF-DMS FRG Grant 1361122.
In addition, GC is supported by a Oden Fellowship at ICES, the GNAMPA of the Istituto Nazionale di Alta Matematica (INdAM) and the FIRB project 2013 ``Geometrical and Qualitative aspects of PDE''. AF is supported by NSF Grant DMS-1262411, and
FM is supported by NSF-DMS Grant 1265910.

\section{A technical lemma} \label{sec:prelim}
In order to perform our computations, and in particular to avoid integrability issues, it will be useful to work with smooth kernels. We thus consider the approximation $K_\e(x)=\vphi_\e(|x|)$ of the kernel $K(x)=\frac1{\om_{n-2}}|x|^{-(n+2s)}$ corresponding to a choice of $\vphi_\e\in C_c^\infty([0,\infty))$ such that $\vphi_\e\ge0$, $\vphi_\e'\le0$, and
\begin{equation}
\label{eq:vare2}
\left\{\begin{split}
&t^{n+2s}\,\var_\e(t)+t^{n+2s+1}\,|\var_\e'(t)|\le C(n,s)\,,
\\
&|\var_\e'(t)|\uparrow \frac{n+2s}{\om_{n-2}}\frac{1}{t^{n+2s+1}}\qquad\text{as $\e\to 0^+$},
\end{split}\right .
\qquad\forall\,t>0\,.
\end{equation}
Note that this implies that, as $\e\to 0$,
\begin{equation}
  \label{dove va vphi}
  \vphi_\e(t)\uparrow\frac{1}{\om_{n-2}\,t^{n+2s}}\qquad\forall \,t>0\, ,
\end{equation}
and both $\vphi_\e$ and $\vphi_\e'$ converge to their limits uniformly on $[t_0,\infty)$ for every fixed $t_0>0$.

Let us define 
\begin{equation}
  \label{weset}
  H_{s,\e}^\Om (p) =
\int_{\R^n} \widetilde\chi_\Om(x)\,\var_\e\bigl(|x-p|\bigr)\, dx\,,\qquad  p \in \pa\Om\,.
\end{equation}
Then, arguing as in \cite[Proposition 6.3]{F2M3} we find that
\begin{equation}
  \label{uniform conv}
  \lim_{\e\to 0}\|H_{s,\e}^\Om - H_{s}^\Om\|_{C^0(\pa \Om)}=0\,,
\end{equation}
provided $\Om$ is a bounded open set with $C^{1,\a}$-boundary for some $\a>2s$. We now prove the following technical fact.

\begin{lemma} \label{lemma:derivata_Hs} Assume that $\Om$ is a bounded open set with $C^{2,\a}$-boundary for some $\a>2s$. Then $H_{s}^\Om\in C^1(\pa\Om)$ and $H_{s,\e}^\Om\to H_s^\Om$ in $C^1(\pa\Om)$ as $\e\to 0$.
\end{lemma}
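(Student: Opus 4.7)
The plan is to derive a boundary-integral expression for $\nabla_p H_{s,\e}^\Om$, extract its tangential component along $\pa\Om$, dominate the integrand by an $\e$-uniform $L^1(\pa\Om)$ majorant built from the $C^{2,\a}$ geometry, and pass to the limit by dominated convergence.

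First I would differentiate under the integral in \eqref{weset} (permitted by the smoothness of $\vphi_\e$), write $\widetilde\chi_\Om=1-2\chi_\Om$, discard the full-space integral (which vanishes by radial symmetry), and apply the divergence theorem to the identity $\vphi_\e'(|x-p|)\frac{x-p}{|x-p|}=\nabla_x\vphi_\e(|x-p|)$ on $\Om$, obtaining
\[
\nabla_p H_{s,\e}^\Om(p) = 2\int_{\pa\Om}\vphi_\e(|x-p|)\,\nu_x\,d\H^{n-1}_x \qquad \text{for every } p\in\R^n.
\]
Since $H_s^\Om$ is intrinsic to $\pa\Om$, only the tangential component is relevant: for $\tau\in T_p\pa\Om$,
\[
\tau\cdot\nabla_p H_{s,\e}^\Om(p) = 2\int_{\pa\Om}\vphi_\e(|x-p|)(\tau\cdot\nu_x)\,d\H^{n-1}_x.
\]

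The core step is to bound this expression by an $\e$-independent integrable function. Fix $p\in\pa\Om$ and parametrize $\pa\Om$ locally as the $C^{2,\a}$ graph $z=f(y)$ with $f(0)=0$ and $\nabla f(0)=0$ over the tangent hyperplane $T_p\pa\Om$. Since $\tau\cdot\nu_p=0$, Taylor expansion yields $\tau\cdot\nu_x = L_\tau(y)+O(|y|^{1+\a})$ with $L_\tau$ linear (hence odd) in $y$. Over any symmetric disk $\{|y|<\rho\}$ the radial factor $\vphi_\e(|y|)$ annihilates $\int L_\tau(y)\vphi_\e(|y|)\,dy$ by symmetry, so only the following survive in the near-zone of $p$: (i) the Taylor remainder $O(|y|^{1+\a})$ of $\tau\cdot\nu_x$, producing an integrand of size $|y|^{1+\a-(n+2s)}$ after using $\vphi_\e(r)\le Cr^{-(n+2s)}$ from \eqref{eq:vare2}; and (ii) the graph correction $\vphi_\e(\sqrt{|y|^2+f^2})-\vphi_\e(|y|)$, which the bound $|\vphi_\e'(r)|\le Cr^{-(n+2s+1)}$ combined with $f=O(|y|^2)$ forces to be of order $|y|^{2-(n+2s)}$, giving an integrand of size $|y|^{3-(n+2s)}$ after multiplication by $L_\tau(y)=O(|y|)$. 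On the $(n-1)$-dimensional surface both produce integrable tails at $y=0$, precisely because $\a>2s$ and $2s<1$. Away from $p$ the integrand is bounded with constants depending only on $\dist(x,p)$, hence benign. The uniform $C^{2,\a}$-regularity on the compact set $\pa\Om$ allows taking this dominating function uniformly in $p$.

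With the $\e$-uniform $L^1$ domination in place, the monotone pointwise convergence $\vphi_\e(r)\uparrow(\om_{n-2}r^{n+2s})^{-1}$ from \eqref{dove va vphi} and dominated convergence give $\tau\cdot\nabla_p H_{s,\e}^\Om(p)\to G(p,\tau)$ uniformly in $(p,\tau)$; continuity of $G$ in $(p,\tau)$ follows from the same domination argument applied along converging sequences of base points and tangent directions. Coupled with the $C^0$ convergence \eqref{uniform conv}, this upgrades the convergence to $H_{s,\e}^\Om\to H_s^\Om$ in $C^1(\pa\Om)$ and yields $H_s^\Om\in C^1(\pa\Om)$. The main obstacle is that the pointwise limit integrand $\vphi_\e(|x-p|)(\tau\cdot\nu_x)\to(\om_{n-2})^{-1}|x-p|^{-(n+2s)}(\tau\cdot\nu_x)$ is not itself locally integrable near $x=p$; dominated convergence becomes available only after exploiting the angular cancellation produced by the first-order vanishing of $\tau\cdot\nu_x$ against the radial $\vphi_\e$, and it is precisely the gap $\a-2s>0$ in the regularity hypothesis that turns the residual Taylor remainder into an integrable error.
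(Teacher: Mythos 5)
Your proof is correct and, at its core, uses the same mechanism as the paper (near-zone Taylor expansion of the boundary data at $p$, odd-symmetry cancellation of the leading singular term, and the margin $\alpha-2s>0$ to control the remainder), but you arrive there through a genuinely different and slightly cleaner boundary-integral representation. The paper first rewrites $H_{s,\e}^\Om$ itself as a surface integral by introducing the radial potential $\psi_\e$ with $\Div(x\,\psi_\e(|x|))=\varphi_\e(|x|)$ (equation \eqref{ciaociao}), then differentiates in $p$; this produces the two-term integrand in \eqref{eq:DHe}, both pieces of which must be symmetrized in $\hat x\mapsto -\hat x$ and estimated separately. You instead differentiate the bulk integral \eqref{weset} directly and apply the divergence theorem to $\nabla_x\varphi_\e(|x-p|)$ over $\Om$, obtaining the single-term formula $\nabla_p H_{s,\e}^\Om(p)=2\int_{\pa\Om}\varphi_\e(|x-p|)\,\nu_x\,d\H^{n-1}_x$, so only $\tau\cdot\nu_x$ (which vanishes at $x=p$) needs to be expanded. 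This halves the bookkeeping: after the cancellation $\int\varphi_\e(|y|)\,L_\tau(y)\,dy=0$, your three error sources (the $O(|y|^{1+\alpha})$ normal remainder, the graph correction to $\varphi_\e$, and the area-element correction) all yield a single $\e$-uniform dominating function of order $|y|^{\alpha-1-2s}+|y|^{1-2s}$ in radial variables, exactly as in the paper's \eqref{eq:HC2}.

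One point deserves a small repair: dominated convergence by itself gives only pointwise convergence of $\tau\cdot\nabla_p H_{s,\e}^\Om(p)$; to conclude $C^1(\pa\Om)$ convergence you need this to be uniform in $p$ (and $\tau$). This is easy to fix with a Cauchy-in-$\e$ argument that uses the same ingredients you already have: the near-zone contribution over $\B_\rho$ is bounded by $C\rho^{\alpha-2s}$ \emph{uniformly in $\e$ and $p$} (by the uniform $L^1$ domination and the compactness of $\pa\Om$), while the far-zone contribution is Cauchy in $\e$ \emph{uniformly in $p$} because $\varphi_\e\to\varphi$ uniformly on $[\rho,\mathrm{diam}\,\Om]$. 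Taking first $\e,\eta\to 0$ and then $\rho\to 0$ gives the uniform Cauchy estimate, which is precisely how the paper closes the argument.
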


\begin{proof} 
Since we already know that $H_{s,\e}^\Om$ converge to $H_{s}^\Om$ in $C^0$ (see \eqref{uniform conv}), it is enough to 
prove that $H_{s,\e}^\Om$ is a Cauchy sequence in $C^1$, that is
\begin{equation}
  \label{that}
  \lim_{(\e,\eta)\to (0,0)}\|\n H_{s,\e}^\Om-\n H_{s,\eta}^\Om\|_{C^0(\pa\Om)}=0\,.
\end{equation}
To this end we first notice that, by setting
$$
\psi_\e(t)=-\frac{1}{t^{n}}\int_t^\infty \var_\e(\tau)\,\tau^{n-1}\,d\tau\qquad \forall\,t>0
$$
we have
$$
\Div \bigl(x\,\psi_\e(|x|)\bigr)=n\,\psi_\e(|x|)+|x|\,\psi_\e'(|x|)=\var_\e(|x|) \qquad \forall\,x \in \R^n\,,
$$
hence $H_{s,\e}^\Om$ can be rewritten as
\begin{equation}
  \label{ciaociao}
  H_{s,\e}^\Om(p)=-2\int_{\pa\Om}\psi_\e\bigl(|x-p|\bigr)\,(x-p)\cdot \nu_x\,d\H^{n-1}_x\qquad\forall \,p\in\pa\Om\,.
\end{equation}
Note that $\psi_\e$ is smooth, it satisfies
\begin{equation}
  \label{stime psi}t^{n+2s}\,\psi_\e(t)+t^{n+2s+1}\,|\psi_\e'(t)|+t^{n+2s+2}\,|\psi_\e''(t)|\le C(n,s)  \qquad\forall \,t>0
\end{equation}
(thanks to \eqref{eq:vare2}), and both $\psi_\e$ and $\psi_\e'$ converge uniformly to their limits on $[t_0,\infty)$ for every fixed $t_0>0$ as $\e\to 0$.

Now, given $p\in\pa\Om$ and $\hat e\in T_p(\pa\Om)\cap \mathbb S^{n-1}$ a tangent vector, by the smoothness of $\psi_\e$ one finds
\begin{equation}
\label{eq:DHe}
\n H_{s,\e}^\Om(p)\cdot \hat e=
2\int_{\pa\Om}\Bigl(\psi_\e\bigl(|x-p|\bigr)\,\nu_x\cdot \hat e+\frac{\psi_\e'\bigl(|x-p|\bigr)}{|x-p|}\,[(x-p)\cdot \nu_x]\,[(x-p)\cdot \hat e]\Bigr)\,\,d\H^{n-1}_x\,.
\end{equation}
Up to decomposing $\R^n=\R^{n-1}\times\R$ so that $x=(\hat{x},x_n)$ denotes the generic point in $\R^n$, and up to translating $p$ into the origin $\zero$, 
we define 
\begin{equation}
\label{eq:DC}
\D_\rho=\{\hat{x}\in\R^{n-1}:|\hat{x}|<\rho\}\,,\qquad\C_\rho=\D_\rho\times(-\rho,\rho)\,,
\end{equation}
and we see that the smoothness of $\pa \Om$ implies that, up to a rotation, there exist $\rho>0$ and a function $f\in C^{2,\a}(\D_\rho)$, with $f(0)=\nabla f(0)=0$ and $\|f\|_{C^{2,\a}(\D_\rho)}\le L$,
such that
\[
\C_\rho\cap\pa\Om=\big\{(\hat{x},f(\hat{x})):\hat{x}\in\D_\rho\big\}=(\Id\times f)(\D_\rho)
\]
(by compactness of $\pa\Om$, both $\rho$ and $L$ are independent of the point $p \in \pa\Om$ under consideration). 
Now, if we set $\B_r=(\Id\times f)(\D_r)$ for $r \in (0,\rho)$, then by the uniform convergence of $\psi_\e$ and $\psi_\e'$ on $[r,\infty)$ we find that
\begin{multline}
\label{eq:HC1}
\bigg|\int_{\pa\Om\setminus \mathcal B_r}\Bigl(\psi_\e\bigl(|x|\bigr)\,\nu_x\cdot \hat e+\frac{\psi_\e'\bigl(|x|\bigr)}{|x|}\,(x\cdot \nu_x)\,(x\cdot \hat e)\Bigr)\,\,d\H^{n-1}_x\\
-\int_{\pa\Om\setminus \mathcal B_r}\Bigl(\psi_{\eta}\bigl(|x|\bigr)\,\nu_x\cdot \hat e+\frac{\psi_\eta'\bigl(|x|\bigr)}{|x|}\,(x\cdot \nu_x)\,(x\cdot \hat e)\Bigr)\,\,d\H^{n-1}_x\bigg|\to 0
\end{multline}
as $\e,\eta \to 0$. 

On the other hand, having in mind \eqref{eq:DHe} and \eqref{eq:HC1} and noticing that $\nu_x=\frac{(-\nabla f(\hat x),1)}{\sqrt{1+|\nabla f(\hat x)|^2}}$
for $x=(\hat x,f(\hat x)) \in \B_r$,
taking into account that $\hat{e}\cdot e_n=0$ for every $\hat{e}\in T_\zero(\pa\Om)$ we see that
\begin{align}\nonumber
&2\int_{\mathcal B_r}\Bigl(\psi_\e\bigl(|x|\bigr)\,\nu_x\cdot \hat e+\frac{\psi_\e'\bigl(|x|\bigr)}{|x|}\,(x\cdot \nu_x)\,(x\cdot \hat e)\Bigr)\,\,d\H^{n-1}_x
\\
&=\label{ciaociao2}
  2\int_{\D_r}
\biggl(-\psi_\e\Bigl(\sqrt{|\hat{x}|^2+f^2}\Bigr)\,\n f\cdot \hat e+\frac{\psi_\e'\left(\sqrt{|\hat{x}|^2+f^2}\right)}{\sqrt{|\hat{x}|^2+f^2}}\,[f- \n f\cdot \hat{x}]\,(\hat{x}\cdot \hat e)\biggr)\,\,d\hat{x}\,,
\end{align}
where $f=f(\hat{x})$ and $\nabla f=\nabla f(\hat{x})$. To get a good control on the above quantity, we symmetrize it with respect to $\hat{x}$ by performing the change of variable $\hat{x}\mapsto -\hat{x}$ and then add the two expressions (the one with the variable $\hat{x}$
and the one with $-\hat{x}$). In this way we see that the integral in \eqref{ciaociao2} is equal to
\begin{align*}
&
-\int_{\D_r}
\psi_\e\Bigl(\sqrt{|\hat{x}|^2+f(\hat{x})^2}\Bigr)\,\Bigl(\n f(\hat{x})\cdot\hat  e+\n f(-\hat{x})\cdot \hat e\Bigr)\,d\hat{x}\\
&+\int_{\D_r}\biggl(\psi_\e\Bigl(\sqrt{|\hat{x}|^2+f(\hat{x})^2}\Bigr)-\psi_\e\Bigl(\sqrt{|\hat{x}|^2+f(-\hat{x})^2}\Bigr)\biggr)\,\n f(-\hat{x})\cdot \hat e\,d\hat{x}\\
&+\int_{\D_r}
\frac{\psi_\e'\left(\sqrt{|\hat{x}|^2+f(\hat{x})^2}\right)}{\sqrt{|\hat{x}|^2+f(\hat{x})^2}}\,\Bigl([f(\hat{x})-f(-\hat{x})]\,(\hat{x}\cdot \hat e)
-\,[ \n f(\hat{x})\cdot \hat{x}+ \n f(-\hat{x})\cdot \hat{x}]\,(\hat{x}\cdot \hat e)\Bigr)\,d\hat{x}\\
&+\int_{\D_r}
\biggl(\frac{\psi_\e'\left(\sqrt{|\hat{x}|^2+f(\hat{x})^2}\right)}{\sqrt{|\hat{x}|^2+f(\hat{x})^2}}-\frac{\psi_\e'\left(\sqrt{|\hat{x}|^2+f(-\hat{x})^2}\right)}{\sqrt{|\hat{x}|^2+f(-\hat{x})^2}}\biggr)\,[f(-\hat{x})+ \n f(-\hat{x})\cdot \hat{x}]\,(\hat{x}\cdot \hat e)\,d\hat{x}.
\end{align*}
Hence, since $f(0)=\n f(0)=0$ and recalling \eqref{stime psi}, we can find a constant $C$, depending only on $n,s,L$, such that, for $|\hat{x}|<\rho$,
$$
|\n f(\hat{x})\cdot \hat e+\n f(-\hat{x})\cdot \hat e| \leq C\,|\hat{x}|^{1+\a},\qquad |\n f(-\hat{x})|\leq C|\hat{x}|,\qquad |f(\hat{x})|\leq C|\hat{x}|^2 \,,
$$
$$
|f(\hat{x})-f(-\hat{x})|\leq C\,|\hat{x}|^{2+\a},\qquad |\n f(\hat{x})\cdot \hat{x}+ \n f(-\hat{x})\cdot \hat{x}| \leq C|\hat{x}|^{2+\a} \,,
$$
$$
\biggl|\psi_\e\Bigl(\sqrt{|\hat{x}|^2+f(\hat{x})^2}\Bigr)-\psi_\e\Bigl(\sqrt{|\hat{x}|^2+f(-\hat{x})^2}\Bigr)\biggr|
\leq
C\frac{\bigl|f(\hat{x})^2-f(-\hat{x})^2\bigr|}{|\hat{x}|^{n+2s+2}}
\leq \frac{C}{|\hat{x}|^{n+2s-2-\a}},
$$
$$
\biggl|\frac{\psi_\e'\left(\sqrt{|\hat{x}|^2+f(\hat{x})^2}\right)}{\sqrt{|\hat{x}|^2+f(\hat{x})^2}}-\frac{\psi_\e'\left(\sqrt{|\hat{x}|^2+f(-\hat{x})^2}\right)}{\sqrt{|\hat{x}|^2+f(-\hat{x})^2}}\biggr|\leq
C\frac{\bigl|f(\hat{x})^2-f(-\hat{x})^2\bigr|}{|\hat{x}|^{n+2s+4}}
\leq \frac{C}{|\hat{x}|^{n+2s-\a}},
$$
thus
\begin{equation}
\label{eq:HC2}
\biggl|
\int_{\mathcal B_r}\Bigl(\psi_\e\bigl(|x|\bigr)\,\nu_x\cdot \hat e+\frac{\psi_\e'\bigl(|x|\bigr)}{|x|}\,(x\cdot \nu_x)\,(x\cdot \hat e)\Bigr)\,\,d\H^{n-1}_x\biggr| \leq C\,r^{\a-2s},
\end{equation}
where $C$ depends only on $n$, $s$ and $L$. Therefore, combining \eqref{eq:HC1} and \eqref{eq:HC2} we obtain
$$
\limsup_{\e,\eta\to 0}\bigl|\n H_{s,\e}^\Om(\zero)\cdot \hat e-\n H_{s,\eta}^\Om(\zero)\cdot \hat e\bigr|\leq C\,r^{\a-2s}\,,
$$
for every $r\in(0,\rho)$ and any unit tangent vector $\hat e \in T_\zero(\pa \Om)\cap \SS^{n-1}$. Hence, by letting $r\to 0^+$ we conclude the proof.
\end{proof}

%

\section{Symmetry and the Nonlocal Alexandrov Theorem} \label{sec:symmetry} We start by introducing the notation used in exploiting the moving planes method. Given $e\in\SS^{n-1}$, $A\subset\R^n$, and $\mu\in\R$, we set
\begin{equation} \label{def:movingplanes}
\begin{array}{lll}
&\pi_{\mu}=\{ x\in\R^n: x\cdot e=\mu\} \ &\mbox{a hyperplane orthogonal to $e,$}\\
& \E_\mu=\{ x\in\R^n: x\cdot e>\mu\}\ &\mbox{the half-space on the ``positive'' side (with respect to $e$) of $\pi_\mu$,}\\
& A_{\mu}=\Om \cap \E_\mu &\mbox{the ``positive'' cap of $A$},\\
& x'_\mu=x-2\,(x\cdot e-\mu)\, e \ &\mbox{the reflection of $x$ with respect to $\pi_{\mu}$},\\
& A'_\mu=\{x'_\mu:x\in A\}  &\mbox{the reflection of $A$ with respect to $\pi_\mu$}.
\end{array}
\end{equation}
Now, if $\Omega$ is an open bounded (not necessarily connected) set in $\R^n$ with $C^1$-boundary and $\L =\sup\{x\cdot e: x\in \Om\}$, then for every $\mu<\L$  sufficiently close to $\L$ the reflection with respect to $\pi_\mu$ of the positive cap $\Om_\mu$ is contained in $\Om$, so it makes sense to define
\begin{equation}\label{def:lam_critico}
\l=\inf\bigl\{\mu\in\R: (\Om_{\tilde\mu})_{\tilde\mu}'\subset \Om \mbox{ for all } \tilde\mu\in(\mu,\L)\bigl\}\,.
\end{equation}
In the sequel, given a direction $e\in \pa B_1(\zero)$, $\pi_\l$ and $\Om_\l$ will be referred to as the {\it critical hyperplane} and the {\it critical cap} respectively, and for the sake of simplicity we will set
\[
x'=x'_\l=x-2\,(x\cdot e-\l)\, e\,,\qquad \Om'=\Om'_\l=\{x':x\in\Om\}\,.
\]
With this notation at hand, we recall from \cite{A} that for every direction $e$ at least one of the following two conditions always holds:

\medskip

\noindent {\it Case 1}: $\pa \Om_{\l}'$ is tangent to $\pa \Om$ at some
point $p' \in \pa \Om$, which is the reflection in $\pi_\l$ of a point $p\in \pa \Om_\l\setminus\pi_{\l}$;

\medskip

\noindent {\it Case 2}: $\pi_{\l}$ is orthogonal to $\pa \Om$ at some point $q\in\pa \Om\cap\pi_{\l}$.

\medskip

\noindent Both our main results will be based on the analysis of these two possibilities, under the assumption that $\de_s(\Om)=0$ or that $\de_s(\Om)$ is small, respectively.\\

We now prove the following result showing that $\delta_s(\Om)$ controls the $L^1$-distance between $\Om$ and $\Om'$
(recall that, given two sets $E$ and $F$, $E\triangle F$ denotes the symmetric difference of the two sets, that is $E\triangle F=(E\setminus F)\cup(F\setminus E)$).
Actually, to be able to obtain a sharp stability estimate in Theorem \ref{thm:stability},
it will be important to prove a stronger bound on $|\Om \triangle \Om'|$ when the set $\Om$ is already comparable to a ball of radius $1$ (see statement (b) below).

\begin{proposition} \label{prop:1direction}
Assume $\Om$ is a bounded open set with $C^{2,\a}$-boundary for some $\a>2s$,
fix $e \in \SS^{n-1}$,
and let $\Om'$ denote the reflection of $\Om$ with respect to the critical hyperplane $\pi_\l$.
\begin{enumerate}
\item[(a)] The bound
\begin{equation}
\label{diff_symm_small}
|\Om \triangle \Om'| \leq C_1\,\diam(\Om)^{n+s+(1/2)}\, \sqrt{\de_s(\Om)}
\end{equation}
holds with
\begin{equation} \label{def:C0}
C_1=2\,\sqrt{\frac{2\,\om_{n-2} }{n+2s}}\,.
\end{equation}
\item[(b)] Assume in addition that $\dist(\zero,\pi_\l)\leq 1/8$
and $B_r(\zero)\subset \Om\subset B_R(\zero)$ for some radii satisfying
\begin{equation}
\label{eq:radii}
\frac{1}{2} \leq r \leq R \leq 2,\qquad R-r \geq 16\,\delta_s(\Om).
\end{equation}
Then there exists a dimensional constant $C(n)$ such that
\begin{equation}
  \label{C1}
|\Om \triangle \Om'| \leq C(n)\,\sqrt{\de_s(\Om)}\sqrt{R-r}.
\end{equation}
\end{enumerate}
\end{proposition}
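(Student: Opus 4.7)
The plan is to combine the moving-planes method with the Lipschitz hypothesis on $H_s^\Om$ measured by $\de_s(\Om)$. At the critical level $\lambda$ one has the classical dichotomy: either (Case~1) there exist reflected boundary points $p \in \pa\Om \cap \E_\lambda$ and $p' \in \pa\Om \cap \E_\lambda^c$, or (Case~2) some $q \in \pa\Om \cap \pi_\lambda$ has tangent plane orthogonal to $\pi_\lambda$. In Case~1 the isometry of reflection forces $H_s^{\Om'}(p') = H_s^\Om(p)$, so the definition of $\de_s$ gives
\[
H_s^{\Om'}(p') - H_s^\Om(p') = H_s^\Om(p) - H_s^\Om(p') \le \de_s(\Om)\,|p-p'|.
\]
Using $\widetilde\chi_{\Om'} - \widetilde\chi_\Om = 2(\chi_\Om - \chi_{\Om'})$, the moving-planes inclusion $(\Om_\lambda)' \subset \Om$ (which yields $\Om\setminus\Om' \subset \E_\lambda^c$ and $\Om'\setminus\Om = (\Om\setminus\Om')'$), and a reflection change of variables in the integral over $\Om'\setminus\Om$, I would rewrite the left-hand side as
\[
\frac{2}{\om_{n-2}} \int_{\Om\setminus\Om'} \Bigl( \frac{1}{|x-p'|^{n+2s}} - \frac{1}{|x-p|^{n+2s}} \Bigr)\, dx,
\]
with a nonnegative integrand (since $|x-p'| \le |x-p|$ whenever $x,p' \in \E_\lambda^c$ and $p \in \E_\lambda$).

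A direct coordinate computation yields the exact identity $|x-p|^2 - |x-p'|^2 = 4(\lambda - x\cdot e)(p\cdot e - \lambda)$, so the elementary inequality $A^{-k} - B^{-k} \ge k(B^2-A^2)/(2B^{k+2})$ valid for $0 \le A \le B$, applied with $k = n+2s$ together with the crude estimate $|x-p| \le \diam(\Om)$, yields
\[
\frac{1}{|x-p'|^{n+2s}} - \frac{1}{|x-p|^{n+2s}} \ge \frac{2(n+2s)\,(\lambda - x\cdot e)(p\cdot e - \lambda)}{\diam(\Om)^{n+2s+2}}.
\]
Inserting this and cancelling the common factor $(p\cdot e - \lambda) = |p-p'|/2 > 0$ produces the weighted bound
\[
\int_{\Om\setminus\Om'} (\lambda - x\cdot e)\,dx \le \frac{\om_{n-2}\,\diam(\Om)^{n+2s+2}}{2(n+2s)}\,\de_s(\Om) =: M.
\]
Case~2 I would treat by a limit argument: taking $p_k \in \pa\Om\cap\E_\lambda$ with $p_k \to q$ and dividing the analogous inequality by $|p_k - p_k'| \to 0$, the $C^1$-continuity of $H_s^\Om$ granted by Lemma~\ref{lemma:derivata_Hs} lets one pass to the limit even when $p_k'$ fails to lie exactly on $\pa\Om$, recovering the same bound $M$.

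To convert $M$ into a volume estimate on $A := \Om\setminus\Om'$, set $d(x) = \lambda - x\cdot e \ge 0$ on $A$ and, for any $t>0$, split
\[
|A| \le |A\cap\{d>t\}| + |A\cap\{d\le t\}| \le \frac{M}{t} + t\,\diam(\Om)^{n-1},
\]
using Chebyshev on the first summand and Fubini together with the crude slab bound $\H^{n-1}(\Om\cap\pi_\mu) \le \diam(\Om)^{n-1}$ on the second. Optimizing $t = \sqrt{M/\diam(\Om)^{n-1}}$ and using $|\Om\triangle\Om'| = 2|A|$ produces exactly \eqref{diff_symm_small} with $C_1 = 2\sqrt{2\om_{n-2}/(n+2s)}$. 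For part~(b) the same scheme applies with a sharpened slab bound: under the near-ball hypotheses both $\Om\cap\pi_\mu$ and $\Om'\cap\pi_\mu$ are sandwiched between concentric $(n-1)$-disks in $\pi_\mu$ whose radii differ by at most a dimensional constant times $R-r$, so $\H^{n-1}((\Om\triangle\Om')\cap\pi_\mu) \le C(n)(R-r)$; here $\dist(\zero,\pi_\lambda) \le 1/8$ controls the reflection-induced shift of the sandwiching balls, and the assumption $R-r \ge 16\,\de_s$ allows one to bound $\lambda$ itself by $R-r$ up to dimensional constants. With this replacement and $M \le C(n)\,\de_s$ (since $\diam(\Om) \le 4$), re-optimizing in $t$ gives $|A| \le C(n)\sqrt{\de_s(R-r)}$, which yields (b).

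The principal obstacles I expect are (i) making Case~2 rigorous, which is precisely where the new $C^1$-continuity from Lemma~\ref{lemma:derivata_Hs} enters, and (ii) establishing the sharpened cross-sectional estimate needed for (b), in particular the control of $\lambda$ by $R-r$ under the hypothesis $R-r \ge 16\,\de_s$; this is the quantitative ingredient that trades the naive $\diam^{n-1}$ slab bound for the sharper factor $R-r$ and produces the extra $\sqrt{R-r}$ improvement over (a).
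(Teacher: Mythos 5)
Your overall strategy matches the paper's: prove the weighted $L^1$ estimate $\int_{\Om\setminus\Om'}(\lambda - x\cdot e)\,dx \lesssim \de_s(\Om)\,\diam(\Om)^{n+2s+2}$ via moving planes, then convert it to a volume bound by splitting into $\{\dist(x,\pi_\lambda)>\gamma\}$ (Chebyshev) and $\{\dist(x,\pi_\lambda)\le\gamma\}$ (a slab estimate), optimizing in $\gamma$. Case~1 is handled correctly: your reflection change of variables (integrating over $\Om\setminus\Om'$ with the two kernels $|x-p|^{-n-2s}$ and $|x-p'|^{-n-2s}$) is equivalent to the paper's version (which integrates over $\Om'\setminus\Om$), and your elementary inequality is exactly the convexity bound the paper uses. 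Part~(a) is also essentially identical.

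However, there are two genuine gaps, both of which you partially acknowledge but which deserve emphasis.

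\emph{Case~2.} Your proposed limit argument $p_k\to q$ with $p_k\in\pa\Om\cap\E_\lambda$ cannot be run directly, because for $p_k$ close to (but not at) a tangential touching point, the reflected point $p_k'$ will generally \emph{not} lie on $\pa\Om$, so the Lipschitz bound $|H_s^\Om(p_k) - H_s^\Om(p_k')|\le\de_s(\Om)|p_k-p_k'|$ simply does not apply (its left-hand side is not even well-defined). The paper avoids this entirely by computing the tangential derivative $\n H_s^\Om(q)\cdot e$ directly through the smooth approximations $H_{s,\e}^\Om$ of Lemma~\ref{lemma:derivata_Hs}: since $\n u_\e(\cdot)\cdot e$ (with $u_\e(x)=\vphi_\e(|x-q|)$) is odd with respect to $\pi_\lambda$, the contribution of the symmetric part $\Om\cap\Om'$ vanishes identically, and one is left precisely with $\int_{\Om\setminus\Om'}$, to which the same pointwise lower bound on the kernel applies. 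This is not just a technicality one can defer; the $C^1$ convergence of Lemma~\ref{lemma:derivata_Hs} is what makes Case~2 work at all, and the paper invokes it as a direct derivative identity, not as a continuity tool for a sequence of approximate touching points.

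\emph{Part~(b).} Your assertion that ``the assumption $R-r\ge 16\,\de_s$ allows one to bound $\lambda$ itself by $R-r$'' does not follow from the hypotheses of the proposition. A bound of the form $|\lambda|\lesssim R-r$ is indeed eventually established in the paper (it follows from Lemma~\ref{lemma direzioni}(b) combined with $R-r\ge 16\de_s$), but Lemma~\ref{lemma direzioni}(b) itself \emph{uses} Proposition~\ref{prop:1direction}(b), so invoking it here would be circular. The paper instead proves (b) without any bound on $\lambda$ beyond $|\lambda|\le 1/8$: it directly asserts the slab estimate $|\{x\in\Om\triangle\Om': \dist(x,\pi_\lambda)\le\gamma\}| \le C(n)\,\gamma\,(R-r)$ for $\gamma\le 1/4$, coming from the fact that $\Om\setminus\Om'$ lives inside $B_R(\zero)\cap B_r(\zero')^c\cap\E_\lambda^c$, whose cross-sections are annuli of width controlled by the difference of squared radii. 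If you want to reproduce this, you should prove that slab estimate directly from $B_r(\zero)\subset\Om\subset B_R(\zero)$, $|\lambda|\le 1/8$, and $\gamma\le 1/4$, rather than reducing it to a bound on $\lambda$ that you do not have at this stage of the argument.
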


\begin{proof}
We first prove that
\begin{equation}
\label{bound_deficit_one_direct}
\int_{\Om \triangle \Om'} \dist(x,\pi_\l) \,dx \leq \frac{\om_{n-2} }{n+2s}\,\diam(\Om)^{n+2s+2}\, \de_s(\Om).
\end{equation}
Without loss of generality we let $e=e_1$. Let us first assume to be in case 1, that is, there exists $p\in \pa \Om_\l\setminus\pi_{\l}$ such that $p \in \pa \Om\cap\pa \Om'$. Then
\begin{equation}\label{H_diff_1}
\begin{split}
H_s^\Om(p) - H_s^{\Om}(p')&= H_s^\Om(p)-H_s^{\Om'}(p)\\ & = \frac{2}{\om_{n-2}} \biggl( \int_{\Om' \setminus \Om} \frac{1}{|x-p|^{n+2s}} \,dx - \int_{\Om \setminus \Om'} \frac{1}{|x-p|^{n+2s}} \,dx \biggr) \\
& = \frac{2}{\om_{n-2}}  \int_{\Om' \setminus \Om} \left( \frac{1}{|x-p|^{n+2s}}  - \frac{1}{|x'-p|^{n+2s}} \right) \,dx \, ,
\end{split}
\end{equation}
where all the integrals are intended in the principal value sense.
Since $x'=(2\l -x_1,x_2,...,x_n)$,
\begin{equation*}
\begin{split}
\frac{1}{|x-p|^{n+2s}}  - \frac{1}{|x'-p|^{n+2s}} & = \frac{1}{|x'-p|^{n+2s}} \Big[ \Big( \frac{|x'-p|}{|x-p|} \Big)^{n+2s} - 1 \Big] \\
& = \frac{1}{|x'-p|^{n+2s}} \Big[ \Big( 1 + \frac{4(x_1-\l)(p_1-\l)}{|x-p|^2} \Big)^{\frac{n+2s}{2}} - 1 \Big] \,,
\end{split}
\end{equation*}
by the convexity of the function $f(t)=(1+t)^{(n+2s)/2}-1$ we get that if $x\in\Om'$ then
\begin{equation} \label{Nucleo_diff_1}
\begin{split}
\frac{1}{|x-p|^{n+2s}}  - \frac{1}{|x'-p|^{n+2s}}  \geq \frac{2(n+2s) (x_1-\l)(p_1-\l)}{|x'-p|^{n+2s} |x-p|^2} \geq \frac{2(n+2s) (x_1-\l)(p_1-\l)}{\diam(\Om)^{n+2s+2} } \, ,
\end{split}
\end{equation}
where we used the fact that, by construction, $p'\in\pa\Om$ and therefore $|x-p|=|x'-p'|\le\diam(\Om)$ for every $x\in\Om'$. Since $x_1-\l \geq 0$ inside $\Omega'\setminus\Omega$
and $|p-p'|=2(p_1-\l)$, combining \eqref{H_diff_1} and \eqref{Nucleo_diff_1} we find
\begin{equation*}
\begin{split}
\de_s(\Om) &\geq \frac{H_s^{\Om}(p)-H_s^\Om(p')}{2(p_1-\l)} \geq \frac{2(n+2s)}{\diam(\Om)^{n+2s+2}\om_{n-2} } \int_{\Om' \setminus \Om} (x_1-\l)\, dx\\
&= \frac{(n+2s)}{\diam(\Om)^{n+2s+2}\om_{n-2} } \int_{\Om' \Delta \Om} |x_1-\l|\, dx\,,
\end{split}
\end{equation*}
which proves \eqref{bound_deficit_one_direct} in the first case. \\

We now assume that $\pi_{\l}$ is orthogonal to $\pa \Om$ at some point $q\in\pa \Om\cap\pi_{\l}$. Thanks to Lemma \ref{lemma:derivata_Hs} and \eqref{weset}, setting $u_\e(x)=\vphi_\e(|x-q|)$ we have
$$
\n H_s^\Om(q)\cdot e_1=\lim_{\e \to 0} \n H_{s,\e}^\Om(q)\cdot e_1
=-\lim_{\e \to 0} \int_{\R^n}\widetilde\chi_\Om(x)\,\nabla u_\e(x)\cdot e_1\,dx
=-2\lim_{\e \to 0} \int_{\Om} \nabla u_\e(x)\cdot e_1\,dx
$$
where we used that $\int_{\R^n}\nabla u_\e=0$. Since $\nabla u_\e(x)\cdot e_1=\vphi'_\e(|x-q|)\,\frac{(x-q)\cdot e_1}{|x-q|}$ is odd with respect to the hyperplane $\{x_1=\l\}$ (notice that $\l=q_1$) and $\l$ is the critical value for $e_1$, we find that $\int_{\Om\cap \Om'}\nabla u_\e\cdot e_1=0$, hence
$$
\n H_s^\Om(q)\cdot e_1=2\lim_{\e\to 0}\int_{\Om\setminus \Om'}\bigl|\var_\e'(|x-q|)\bigr|\,\frac{(x-q)\cdot e_1}{|x-q|}\,dx\,.
$$
We now observe that $\Om\setminus \Om'$ is contained inside the half-space $\{x_1\leq \l\}$ where the function
$\frac{(x-q)\cdot e_1}{|x-q|}$ is non-positive, so by \eqref{eq:vare2} and monotone convergence we obtain
$$
\n H_s^\Om(q)\cdot e_1=-\frac{2(n+2s)}{\om_{n-2}}\int_{\Om\setminus \Om'} \frac{(x-q)\cdot e_1}{|x-q|^{n+2s+2}}\,dx\,.
$$
Since $|\n H_s^\Om(q)\cdot e_1| \leq \de_s(\Om)$ and
$$
-\frac{(x-q)\cdot e_1}{|x-q|^{n+2s+2}} \geq \frac{|x_1-q_1|}{\diam(\Om)^{n+2s+2}}= \frac{|x_1-\l|}{\diam(\Om)^{n+2s+2}}\qquad \mbox{on $\Om\setminus\Om'\subset \{x_1\leq 0\}$}\,,
$$
we finally get
$$
\de_s(\Om) \geq \frac{2(n+2s)}{\diam(\Om)^{n+2s+2}\om_{n-2} }
\int_{\Om' \setminus \Om} |x_1-\l| \,dx=\frac{(n+2s)}{\diam(\Om)^{n+2s+2}\om_{n-2} }
\int_{\Om' \Delta \Om} |x_1-\l| \,dx\,,
$$
which completes the proof of \eqref{bound_deficit_one_direct}.\\

We now prove (a). For this it is enough to combine \eqref{bound_deficit_one_direct} with Chebyshev's inequality to get
$$
\big|\big\{x\in \Om \triangle \Om':\ \dist(x,\pi_\l)\geq \gamma \big\} \big| \leq \frac{1}{\gamma}\,\frac{\om_{n-2} }{n+2s}\, \diam(\Om)^{n+2s+2} {\de_s(\Om)}\,,
$$
that together with the trivial bound
$$
\big| \big\{x\in \Om \triangle \Om':\ \dist(x,\pi_\l) \leq \gamma\big\} \big| \leq  2\,\gamma\,\diam (\Om)^{n-1}\,,
$$
gives us \eqref{diff_symm_small} choosing $\gamma=\sqrt{\frac{\om_{n-2} }{2(n+2s)}}\,\diam(\Om)^{s+(3/2)}\sqrt{\de_s(\Om)}$.\\

If we know in addition that $\dist(\zero,\pi_\l)\leq 1/8$ and that $B_r(\zero)\subset \Om\subset B_R(\zero)$
for some radii satisfying \eqref{eq:radii}, then
we can use the stronger bound
$$
\big| \big\{x\in \Om \triangle \Om':\ \dist(x,\pi_\l) \leq \gamma\big\} \big| \leq  C(n)\,\gamma\,(R-r)\qquad \forall\, \gamma\leq 1/4,
$$
so \eqref{C1} follows by choosing $\gamma=\sqrt{\frac{\delta_s(\Om)}{R-r}}$.
\end{proof}

We now deduce Theorem \ref{thm rigidity} from Proposition \ref{prop:1direction}.

\begin{proof}[Proof of Theorem \ref{thm rigidity}]
We begin by noticing that, thanks to the regularity theory developed in \cite{BFV}
(see in particular the proof of \cite[Theorem 1]{BFV}),
$C^{1,2s}$
 domains with constant nonlocal mean curvature are actually $C^\infty$, so
 Proposition \ref{prop:1direction} applies.
In particular, since by assumption $\de_s(\Om)=0$, Proposition \ref{prop:1direction} implies that $\Om$ is symmetric in any direction.

Since the barycenter $\bb$ of $\Om$ belongs to every axis of symmetry and every rotation can be written as a composition of reflections, we have that $\Om$ is invariant under rotations, which implies that $\pa\Om$ is a collection of concentric spheres centered at $\bb$. To show that $\pa\Om$ is just one sphere, we apply again the method of moving planes in an arbitrary direction: if $\pa\Om$ is not connected then the critical hyperplane must be a hyperplane of symmetry and cannot contain $\bb$, which is a contradiction. Hence $\pa\Om$ must have a single connected component, i.e., $\pa\Om$ is a sphere.
\end{proof}

\section{Stability}\label{sec:stab} 
Before proving Theorems \ref{thm:stability} and \ref{thm:stability2} we first show the following lemma
stating that if $\de_s(\Om)$ is small then, up to a translation, all critical planes from the moving planes method pass close to the origin.
Again, as in Proposition \ref{prop:1direction}, it will be important to show a stronger bound when $\Om$ is comparable to a ball of radius $1$.

\begin{lemma}\label{lemma direzioni} Let $\Om$ be an open bounded set of class $C^{2,\a}$ for some $\a>2s$ with
  \begin{equation} \label{delta_small}
\frac{\diam(\Om)^{n+s+(1/2)}}{|\Om|}\,\sqrt{\de_s(\Om)} \leq \min\Bigl\{\frac14,\frac 1n\Bigr\}\,\sqrt{\,\frac{n+2s}{8\,\om_{n-2} }}\,,
\end{equation}
and suppose that the critical planes with respect to the coordinate directions $\pi_{e_i}$ coincide with $\{x_i=0\}$ for every $i=1,...,n$.
Also, given $e\in\SS^{n-1}$, denote by $\l_e$ the critical value  associated to $e$ as in \eqref{def:lam_critico}.
\begin{enumerate}
\item[(a)] The bound 
\begin{equation} \label{dist_zero_pi_small}
|\l_e|\leq C_2 \sqrt{\de_s(\Om)} 
\end{equation}
holds with
 $$C_2=4\,(n+3)\,\frac{\diam(\Om)^{n+s+(3/2)}}{|\Om|}\,C_1,$$ where $C_1$ is as in \eqref{def:C0}.
\item[(b)]
Assume in addition that $\dist(\zero,\pi_\l)\leq 1/8$
and $B_r(\zero)\subset \Om\subset B_R(\zero)$ for some radii satisfying
\eqref{eq:radii}. Then
\begin{equation} \label{dist_zero_pi_small 2}
|\l_e|\leq C^*(n) \sqrt{\de_s(\Om)}\,\sqrt{R-r} \,
\end{equation}
for some dimensional constant $C^*(n)$.
\end{enumerate}
\end{lemma}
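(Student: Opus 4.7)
The plan is to exploit the coordinate symmetry hypothesis to force the barycenter $\bb=|\Om|^{-1}\int_\Om x\,dx$ to be close to the origin, and then to relate an arbitrary critical value $\l_e$ to $\bb\cdot e$ via a first-moment identity, with Proposition~\ref{prop:1direction} controlling the errors in both steps.

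The key ingredient is the identity
\[
\l_e=\bb\cdot e+\frac{1}{2|\Om|}\left(\int_{\Om'\setminus\Om}x\cdot e\,dx-\int_{\Om\setminus\Om'}x\cdot e\,dx\right),
\]
valid for any unit vector $e$, which in particular yields
\[
|\l_e-\bb\cdot e|\le\frac{\diam(\Om)}{2|\Om|}\,|\Om\triangle\Om'|.
\]
This follows from a simple change of variable: the reflection $x\mapsto x-2(x\cdot e-\l_e)e$ maps $\Om$ bijectively onto $\Om'$ with unit Jacobian, giving $\int_{\Om'}x\cdot e\,dx=-\int_\Om x\cdot e\,dx+2\l_e|\Om|$, which combined with the elementary decomposition $\int_{\Om'}-\int_\Om=\int_{\Om'\setminus\Om}-\int_{\Om\setminus\Om'}$ produces the identity.

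For (a), I would first apply the identity with $e=e_i$: since $\l_{e_i}=0$ by hypothesis, one gets $|b_i|\le\frac{\diam(\Om)}{2|\Om|}|\Om\triangle\Om'_i|$, and Proposition~\ref{prop:1direction}(a) then yields $|b_i|\le\frac{C_1\diam(\Om)^{n+s+3/2}}{2|\Om|}\sqrt{\de_s(\Om)}$, so that the trivial bound $|\bb|\le n\max_i|b_i|$ provides control on $|\bb|$. Substituting back into the identity for an arbitrary direction $e$ and invoking Proposition~\ref{prop:1direction}(a) once more bounds $|\l_e|$ by a quantity of order $(n+1)\,C_1\diam(\Om)^{n+s+3/2}\sqrt{\de_s(\Om)}/|\Om|$, comfortably below $C_2\sqrt{\de_s(\Om)}$, giving \eqref{dist_zero_pi_small}.

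The argument for (b) is essentially identical, but uses Proposition~\ref{prop:1direction}(b) in place of (a): the hypothesis $\dist(\zero,\pi_\l)\le 1/8$ holds automatically for the coordinate directions (since the coordinate critical planes pass through $\zero$) and is assumed for $e$, while the radii condition \eqref{eq:radii} makes $\diam(\Om)$ and $|\Om|^{-1}$ dimensional constants. The improved bound $|\Om\triangle\Om'|\le C(n)\sqrt{\de_s(\Om)}\sqrt{R-r}$ then propagates directly through the same chain of inequalities to yield \eqref{dist_zero_pi_small 2}. I don't anticipate any serious obstacle: the only subtlety is keeping track of constants, and the smallness condition \eqref{delta_small} plays no essential role in the argument itself (it merely translates into $|\Om\triangle\Om'|\le\min\{1/4,1/n\}|\Om|$, which will be needed when this lemma is subsequently applied, but not in its derivation).
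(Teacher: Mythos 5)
Your proof is correct, but it takes a genuinely different route from the paper. You use a first-moment (barycenter) identity: the reflection $x\mapsto x'_\mu$ applied to $\Om$ gives $\int_{\Om'}x\cdot e\,dx+\int_\Om x\cdot e\,dx=2\l_e|\Om|$, which combined with $\int_{\Om'}-\int_\Om=\int_{\Om'\setminus\Om}-\int_{\Om\setminus\Om'}$ yields your identity, and then the coordinate symmetries localize $\bb$ near $\zero$. The paper instead works with the point-reflected set $\Om^\zero$, deduces that $\Om$ has small volume in the strip $\{-\l_e\le x\cdot e\le 3\l_e\}$, and then \emph{propagates} this along the moving-planes monotonicity of $\mu\mapsto\H^{n-1}(\Om\cap\pi_\mu)$ to bound $|\Om_{\l_e}|\,\l_e$, finishing with the lower bound $|\Om_{\l_e}|\ge|\Om|/4$ (which is where the smallness condition \eqref{delta_small} actually enters). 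Your approach is more algebraic and cleaner: it avoids the slab-counting/monotonicity argument entirely, and as you correctly observe it does not need \eqref{delta_small} at all, whereas the paper's proof does.

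One small point you should repair: the step from your identity to $|\l_e-\bb\cdot e|\le\frac{\diam(\Om)}{2|\Om|}|\Om\triangle\Om'|$ does not follow simply from $|x\cdot e|\le\diam(\Om)$ on $\Om\triangle\Om'$ — after the translation this crude bound is only $|x\cdot e|\le C(n)\,\diam(\Om)$, which would degrade the constant. The correct argument uses that $|\Om'\setminus\Om|=|\Om\setminus\Om'|$, so one may freely subtract $\l_e$ inside the integral:
\begin{equation*}
\l_e-\bb\cdot e=\frac{1}{2|\Om|}\Bigl(\int_{\Om'\setminus\Om}(x\cdot e-\l_e)\,dx-\int_{\Om\setminus\Om'}(x\cdot e-\l_e)\,dx\Bigr),
\end{equation*}
and then $|x\cdot e-\l_e|\le\diam(\Om)$ on $\Om\triangle\Om'$, because $\Om\setminus\Om'\subset\{m_e\le x\cdot e\le\l_e\}$ with $m_e=\inf_\Om x\cdot e$ and $\l_e\le\L_e$, and $\Om'\setminus\Om$ is the reflected image. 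With this fix, your estimate $|\l_e|\le\frac{(n+1)}{2}\frac{\diam(\Om)^{n+s+3/2}}{|\Om|}C_1\sqrt{\de_s(\Om)}$ follows and is even sharper than the paper's $C_2=4(n+3)\frac{\diam(\Om)^{n+s+3/2}}{|\Om|}C_1$. Part (b) then goes through exactly as you describe, using Proposition~\ref{prop:1direction}(b) for both the coordinate directions (where $\dist(\zero,\pi_{e_i})=0$) and for $e$.
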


\begin{proof}
We first prove (a).
To this aim, we define $\Om^\zero=\{-x:x\in\Om\}$ and set
\begin{equation}
\label{eq:C1*}
C_1^*= C_1\,\diam(\Om)^{n+s+(1/2)},
\end{equation}
where $C_1$ is defined as in \eqref{def:C0}.
Then, since $\Om^\zero$ can be obtained from $\Om$ by symmetrizing it with respect to the hyperplanes $\{x_i=0\}=\pi_{e_i}$ for $i=1,\ldots,n$,
 applying Proposition \ref{prop:1direction} with respect to the coordinate directions we obtain
\begin{equation} \label{diff_symm_zero_small}
|\Om \triangle \Om^\zero| \leq  n\, C_1^*\, \sqrt{\de_s(\Om)}\,.
\end{equation}
Now, to prove \eqref{dist_zero_pi_small} we assume 
that $\l_e >0$ (the case $\l_e<0$ being similar). We first note that
\begin{equation} \label{Lambda_e_bound}
\L_e=\sup\{x\cdot e: x\in \Om\}\leq \diam(\Om)\,.
\end{equation}
Indeed, if $\L_e>\diam(\Om)$, then $x\cdot e\ge0$ for every $x\in\Om$, and thus $|\Om \triangle \Om^\zero | = 2 |\Om|$, which  contradicts \eqref{diff_symm_zero_small} and \eqref{delta_small}. This said, we denote by $\Om'$ the reflection of $\Om$ about the critical hyperplane $\pi_{\l_e}$, and deduce from Proposition \ref{prop:1direction} that
\begin{equation}
  \label{approx symmetry}
  |\Om\Delta\Om'|\le C_1^* \sqrt{\de_s(\Om)}\,.
\end{equation}
Now, recalling the notation $\Omega_\mu=\Omega\cap \E_\mu= \Omega\cap \{x\cdot e >\mu\}$,
it follows by \eqref{approx symmetry} (which tells us that $\Om$ is almost symmetric with respect to $\pi_{\l_e}$) that
\begin{equation}
\label{eq:E1}
|\Omega_{\l_e}| \geq \frac{|\Om|}2 -C_1^*\,\sqrt{\de_s(\Om)} \,.
\end{equation}
Since $\Om$ is almost symmetric about $\zero$ by \eqref{diff_symm_zero_small}, 
using the notation $\E_{\l_e}^\zero=\{-x:x\in\E_{\l_e}\}$ we see that \eqref{eq:E1} gives
$$
|\Omega \cap \E_{\l_e}^\zero|=|\Omega^\zero \cap \E_{\l_e}| \geq
|\Omega_{\l_e}| -|\Omega \Delta \Omega^\zero|\geq \frac{|\Om|}2 -(n+1)C_1^*\sqrt{\de_s(\Om)}\,,
$$
which together with \eqref{eq:E1} implies
\begin{equation}
\label{eq:no mass strip}
|\{x \in \Om:\ -\l_e \leq x \cdot e \leq \l_e \}|  \leq (n+2)C_1^* \sqrt{\de_s(\Om)} \,.
\end{equation}
In other words, by combining the almost-symmetry of $\Om$ with respect to $\zero$ and to $\pi_{\l_e}$ we have shown that $\Om$ has small volume in the strip $\{|x\cdot e|\le\l_e\}$. Since $\{\l_e\le x\cdot e\le 3\l_e\}$ is mapped into $\{|x\cdot e|\le\l_e\}$ by the reflection with respect to $\pi_{\l_e}$, exploiting again \eqref{approx symmetry} and \eqref{eq:no mass strip} we get
\begin{equation}
\label{eq:no mass}
\begin{split}
|\{x \in \Om:\ \l_e < x \cdot e  < 3 \l_e \}|& =|\{x \in \Om':\ |x\cdot e|\leq \l_e \}|\\
&\leq |\{x \in \Om:\ |x\cdot e|\leq \l_e \}|+|\Om\Delta\Om'|\leq (n+3)C_1^* \sqrt{\de_s(\Om)}.
\end{split}
\end{equation}
Define now
$$
m_k:=|\{x \in \Om:\ (2k-1)\l_e \leq x \cdot e \leq (2k+1)\l_e \}|\,,\qquad k\ge 1\,,
$$
and notice that, by the moving planes procedure, the set $\Omega\cap\pi_\mu$ (seen as a subset of $\R^{n-1}$) is included inside
$\Omega\cap\pi_{\mu'}$ whenever $\l_e \leq \mu' \leq \mu$. In particular
the function $\mu \mapsto \mathcal H^{n-1}(\Omega\cap\pi_\mu)$ is decreasing on $(\l_e,\L_e)$, hence $m_k$ is a decreasing sequence and  \eqref{eq:no mass} gives us
$$
m_k \leq m_1 \leq (n+3)C_1^* \sqrt{\de_s(\Om)}\qquad \forall\, k \geq 1.
$$
Recalling that $\Omega \subset \{x\cdot e \leq \L_e\}$, combining this last estimate with \eqref{eq:no mass strip} and letting $k_0$ be the smallest natural number such that $(2k_0+1)\l_e\ge\L_e$ we get
$$
|\Omega_{\l_e}|=
|\Omega\cap \{\l_e \leq x\cdot e \leq \L_e\}|\leq
\sum_{k=1}^{k_0}\,m_k \leq \frac12\Big(\frac{\L_e}{\l_e}+1\Big)\,(n+3)C_1^* \sqrt{\de_s(\Om)}\,,
$$
hence (thanks to \eqref{Lambda_e_bound})
\[
|\Om_{\l_e}|\,\l_e\le (n+3)\,C_1^*\,\diam(\Om)\,\sqrt{\de_s(\Om)}\,.
\]
Since $|\Om_{\l_e}|\ge|\Om|/4$ (by \eqref{eq:E1} and \eqref{delta_small}), recalling \eqref{eq:C1*} we get \eqref{dist_zero_pi_small}.
\\

To prove (b) it suffices to observe that,
under the assumption that $\dist(\zero,\pi_\l)\leq 1/8$
and $B_r(\zero)\subset \Om\subset B_R(\zero)$ with $r,R$ satisfying \eqref{eq:radii}, we can repeat the very same proof done above but using \eqref{C1}
in place of \eqref{diff_symm_small} to obtain \eqref{dist_zero_pi_small 2}.
\end{proof}

We now prove Theorems \ref{thm:stability} and \ref{thm:stability2}.

\begin{proof}[Proof of Theorem \ref{thm:stability}] {\it Step 1: proof of \eqref{stima stab}.}
Up to a translation, we can assume that the critical planes with respect to the coordinate directions $\pi_{e_i}$ coincide with $\{x_i=0\}$ for every $i=1,...,n$.

Notice that, since $\rho(\Om)\le 1$ and $\sqrt{\eta_s(\Om)}=\frac{\diam(\Om)^{n+s+(1/2)}}{|\Om|}\,\sqrt{\de_s(\Om)}$, one can directly assume that \eqref{delta_small} holds. Moreover, setting
\begin{equation} \label{def:ri_re}
r=\min_{x\in \pa\Om} |x|\,,\qquad R=\max_{x\in \pa\Om} |x|\,,
\end{equation}
it is enough to control $R-r$ (as it gives an upper bound on $\rho(\Om)$).

Let $x,y \in \pa\Om$ be such that $|x|=r$ and $|y|=R$. Assuming without loss of generality that $x \neq y$, we consider the unit vector
\[
e=\frac{y-x}{|y-x|}\,,
\]
and let $\pi_{\l_e}$ denote the corresponding critical hyperplane. We notice that $y$ is closer than $x$ to the critical hyperplane $\pi_{\l_e}$, i.e.,
\begin{equation} \label{dist_x_dist_y}
\dist(x,\pi_{\l_e}) \geq \dist(y,\pi_{\l_e})\,.
\end{equation}
Indeed, since $x=y-te$ with $t=|x-y|$,  the method of moving planes implies that the critical position can be reached at most when $y'$ (the reflection of $y$ with respect to $\pi_{\l_e}$) is tangent to $x$, which corresponds to the equality case in \eqref{dist_x_dist_y},
while in all the other cases strict inequality holds. Thus, by \eqref{dist_x_dist_y}
and the fact that $e$ is parallel to $y-x$ we get
\begin{equation}
\label{eq:Rr lambda}
R-r=|y|-|x| \leq 2 \dist(\zero,\pi_{\l_e})=2|\l_e|\,
\end{equation}
that combined with \eqref{dist_zero_pi_small} implies
that 
\begin{equation}
  \label{viavia}
R-r\leq 2\,C_2^* \sqrt{\de_s(\Om)}=16\,(n+3)\,\sqrt{\frac{2\,\om_{n-2} }{n+2s}}\,\eta_s(\Om).
\end{equation}
Now, since all the quantities involved are scaling invariant, we rescale $\Om$ so that $R=1$,
and we assume without loss of generality that
$$
R-r \geq 16\,\delta_s(\Om)
$$
as otherwise \eqref{stima stab} trivially holds.
In this way it follows from \eqref{viavia} that
 \eqref{eq:radii} holds provided  $\eta_s(\Om)$
is small enough.
Also, thanks to \eqref{dist_zero_pi_small} we see that
$\dist(\zero,\pi_{\l_e})\leq 1/8$ for all $e \in \SS^{n-1}$
if $\delta_s(\Om)$ (or equivalently $\eta_s(\Om)$) is sufficiently small.

Hence, this allows us to combine \eqref{eq:Rr lambda} with \eqref{dist_zero_pi_small 2} to get
$$
  R-r\leq 2\,C^*(n) \sqrt{\de_s(\Om)}\,\sqrt{R-r},
$$
that is
\begin{equation}
\label{eq:Rr delta}
R-r \leq  4\,C^*(n)\,\delta_s(\Om),
\end{equation}
which proves \eqref{stima stab}.
\medskip

\noindent {\it Step 2: a quantitative Lipschitz bound on $\pa \Om$}. 
We want to show that if $\eta_s(\Om)\le\eta(n)$ for some dimensional constant $\eta(n)$,
then $\pa \Om$ is Lipschitz-flat with a uniform bound.

Since all the quantities involved are scaling invariant, we assume as at the end of step 1 that $R=1$ so that
$$
B_{r}(\zero)\subset \Om\subset B_{1}(\zero)
$$
with 
\begin{equation}
\label{eq:1r}
1-r \leq C(n)\,\eta_s(\Om)
\end{equation}
(by \eqref{eq:Rr delta}),
and then prove \eqref{lipschitz map} for $\eta_s(\Om)$ small enough.

To this end, it is enough to
show that there exists a dimensional constant $M=M(n)$ such that, for any $x \in \pa \Om$ and $y \in \pa B_{1 -M\eta_s(\Om)}(\zero)$ such that  the ``open'' segment $(x,y)$
is contained outside $B_{1 -M\eta_s(\Om)}(\zero)$, then
$(x,y)\subset {\Om}$. Indeed, this means that for any $x \in \pa \Om$ we can find a uniform cone
of opening $\pi - C\sqrt{\eta_s(\Om)}$ with tip at $x$ and axis parallel to $\frac{x}{|x|}$
which is contained inside $\Om$, and this implies that $\pa \Om$ is locally the graph of a Lipschitz function
satisfying \eqref{lipschitz map}.

Now, to prove the latter fact, assume by contradiction that there exist $x\in\pa\Om$ and $y\in \pa B_{1-M\eta_s(\Om)}(\zero)$ for which there exists
a point $z \in (x,y)\cap \Om^c$.
Set $e=\frac{x-y}{|x-y|}$ and notice that, since $z \in \Om^c$, it follows that the moving planes method has to stop before reaching $z$, that is 
$\l_e\ge z\cdot e$.
Now, since $(x,y)\subset B_{1}(\zero)\setminus B_{1-M\eta_s(\Om)}(\zero)$ and $y \in \pa B_{1-M\eta_s(\Om)}(\zero)$,
we have $y \cdot e \geq 0$.
Hence, since $z-y$ is parallel to $e$ and $z \in \Om^c \subset B_{r}(\zero)^c$ we get
$$
\l_e \geq (z-y)\cdot e+y\cdot e \geq (z-y)\cdot e=|z-y| \geq M\,\eta_s(\Om) -(1-r).
$$
On the other hand \eqref{dist_zero_pi_small} gives
\[
C(n)\,\eta_s(\Om)\ge |\l_e|
\]
(recall that $1\leq \diam(\Om)\le 2$ and $s \in (0,1)$), which
leads to a contradiction to \eqref{eq:1r} provided $M=M(n)$ is large enough.
\end{proof}

\begin{proof}[Proof of Theorem \ref{thm:stability2}]
Our goal here it to exploit the results from Theorem \ref{thm:stability} to get closeness to a ball in higher norms.
For this, we need to show that our assumptions on $H_s^\Om$
imply that $\pa\Om$ is smooth with some {\em quantitative bounds} depending only on $\eta_s(\Om)$.
Hence, we first formulate the following regularity criterion that is implicitly contained in \cite{CG}
(recall that definition of $\C_r$ and $\D_r$ from \eqref{eq:DC}).

Given $n\ge2$, $s,\ell\in(0,1/2)$, and $\b\in(0,2s)$, there exist positive constants $\e=\e(n,s,\ell,\beta)$ and $C_*=C_*(n,s,\ell,\b)$ with the following property: Let $E$ be an open set with $C^{2}$-boundary such that for some $L\ge0$ it holds
\begin{eqnarray}\label{ipotesi su E}
  \|H^E_s\|_{C^0(\pa E)}\le L\,,\qquad
  \frac{|B_{r}(y)\cap E|}{\om_n\,r^n}\in(\ell,1-\ell)\,,\qquad\forall \,y\in \pa E\,,r<\ell\,.
\end{eqnarray}
If $\zero\in\pa E$ and $ r<\ell$ are such that
\begin{equation}\label{flatness hp}
  B_r(\zero)\cap\pa E\subset\big\{x\in\R^n:|x_n|\le \e r\big\}\,,\qquad L\, r\le \e\,,
\end{equation}
then there exists $u\in C^{1,\b}(\D_{ r/2})$ such that
\[
\C_{ r/2}\cap\pa E=(\Id\times u)(\D_{ r/2})\,,
\]
with
\[
\|\nabla u\|_{C^0(\D_{ r/2})}+ r^\b\,[\nabla u]_{C^{0,\b}(\D_{ r/2})}\le C_*\,\Bigl(\frac{\|u\|_{C^0(\D_{ r})}}r+L\,r\Bigr)\,.
\]


\medskip

\noindent {\it Step 1: uniform $C^{2,\gamma}$ bounds on $\pa \Om$}. We show that the regularity criterion stated above applies with $E=\Om$. 

Since, by the definition of $\rho(\Om)$, the radii $1-2\rho(\Om)$ and $1$ must be optimal for the inclusion \eqref{eq:scale omega} to hold,
we can find points $p_1 \in \pa\Om\cap \pa B_{1-2\rho(\Om)}(\zero)$ and $p_2 \in \Om\cap \pa B_{1}(\zero)$. Hence,
it follows by the inclusions \eqref{eq:scale omega} and \eqref{def:Hs} that
$$
H_s^\Om(p_1) \leq H_s^{B_{1-2\rho(\Om)}},\qquad H_s^\Om(p_2)\geq H_s^{B_1},
$$
and because the Lipschitz constant of $H_s^\Om$ is bounded by $\delta_s(\Om)\leq C\, \eta_s(\Om)$
and
$$
|H_s^{B_{1-2\rho(\Om)}}-H_s^{B_1}|\leq \,C\,\rho(\Om) \leq C\, \eta_s(\Om)
$$
(by \eqref{stima stab}),
 we deduce that 
\begin{equation}
\label{eq:mean constant}
\bigl\|H_s^\Om-H_s^{B_1}\bigr\|_{L^\infty(\pa\Om)}  \leq C\,\eta_s(\Om).
\end{equation}

Notice now that the uniform Lipschitz estimate provided by Theorem \ref{thm:stability} implies that the density estimates in 
\eqref{ipotesi su E} hold.
Thus, provided $\eta_s(\Om)$ is small enough, \eqref{ipotesi su E} holds with $L=2\,H_s^{B_1}$ and for some $\ell=\ell(n)>0.$
At the same time we can find $r=r(n)>0$, depending on $\pa B_1(\zero)$ only, such that if $x\in \pa B_1(\zero)$ then
\begin{equation}\label{flatness hp palla}
  B_{2 r}(x)\cap\pa B_1(\zero)\subset\Big\{y\in\R^n:\Big|(y-x)\cdot\frac{x}{|x|}\Big|\le \frac{\e r}2\Big\}\,,\qquad L\, r\le {\e}\,.
\end{equation}
Hence, assuming that $\eta_s(\Om)$ is small enough in terms of $r$, by \eqref{stima stab} and \eqref{eq:scale omega} we can ensure that
\begin{equation}\label{flatness hp Omega}
  B_{r}(z)\cap\pa \Om\subset\Big\{y\in\R^n:\Big|(y-z)\cdot\frac{z}{|z|}\Big|\le \e r \Big\}\qquad \forall\,z \in \pa \Om\,,
\end{equation}
and applying the regularity criterion stated before we obtain that, for any $z \in \pa \Om$, there exists a uniform neighborhood such that,
in a suitable system of coordinates,
$\pa \Om$ is given by the graph of a function $u_z:\D_{r} \to \R$ with
$$
\|u_z\|_{C^{1,\beta}(\D_{r/2})} \leq C(n,s,\beta).
$$
Now, 
choosing $\beta$ arbitrarily close to $2s$ and exploiting the fact that $H^\Om_s\in C^{0,\g}(\pa\Om)$ for every $\g\in(0,1)$ together with the higher regularity theory by \cite[Section 3]{BFV}, we obtain that 
$$
\|u_z\|_{C^{2,\tau}(\D_{r/4})} \leq C(n,s,\tau)
$$
for any $\tau <2s$.

\medskip
\noindent {\it Step 2: $\pa \Om$ is $C^2$-close to a sphere linearly in $\eta_s(\Om)$}.
By the previous step we know that there exists a map
$f:\pa B_1(\zero)\to \R$ of class $C^{2,\tau}$ for any $\tau<2s$ satisfying
$$
\|f\|_{C^{2,\tau}(\pa B_1(\zero))}\le C(n,s,\tau)
$$
and such that $\pa\Om=\{y+f(y)\,y\,:\,y \in \pa B_1(\zero)\bigr\}$.
Notice that, by \eqref{eq:scale omega},
\begin{equation}
\label{eq:infty f}
\|f\|_{L^\infty(\pa B_1(\zero))}\le C(n)\,{\eta_s(\Om)}\,,
\end{equation}
so we deduce by interpolation that for any $\zeta<2s$ there exists an exponent $\a(\zeta)>0$ such that
\begin{equation}
\label{eq:C2zeta}
\|f\|_{C^{2,\zeta}(\pa B_1(\zero))}\le C(n,s,\zeta)\,\eta_s(\Om)^{\a(\zeta)}\,.
\end{equation}
This implies in particular that $\pa \Om$ is $C^2$-close to a sphere, so $\Om$ is convex for $\eta_s(\Om)$
sufficiently small.
We now want to show that \eqref{eq:C2zeta} is still valid if we replace $\a(\zeta)$ with $1$,
which will prove the theorem with $F(y)=y+f(y)\,y$.

For this, we write the nonlocal mean curvature in terms of $f$ starting from \eqref{def:Hsdiv}:
in this way, since any point $x \in \pa \Om$ can be written as $y+f(y)\,y$ with $y \in \pa B_1(\zero)$,
by the area formula we get that, at the point $p=q+f(q)\,q \in \pa \Om$,
$$
H_s^\Om(p)=\frac{1}{s\,\om_{n-2}}\int_{\pa B_1(\zero)} \frac{y+f(y)\,y - q-f(q)\,q}{|y+f(y)\,y - q-f(q)\,q|^{n+2s}} \cdot \Bigl(y-\frac{\n_T f(y)}{1+f(y)}\Bigr)\,\bigl(1+f(y)\bigr)^{n-1}\,d\H^{n-1}_y.
$$
To simplify the notation we define the vector-field  $v_q(y):=y+f(y)\,y - q-f(q)\,q$, so that the above expression becomes
\begin{align*}
H_s^\Om(p)&=\frac{1}{s\,\om_{n-2}}\int_{\pa B_1(\zero)} \frac{v_q(y)}{|v_q(y)|^{n+2s}}\\
&\qquad\qquad\qquad\qquad \cdot \Bigl(y
\bigl(1+f(y)\bigr)^{n-1}
-\frac{1}{n-1}\n_T\Bigl[ \bigl(1+f(y)\bigr)^{n-1}- \bigl(1+f(q)\bigr)^{n-1}\Bigr]\Bigr)\,d\H^{n-1}_y.
\end{align*}
Now, noticing that the normal to $\pa B_1(\zero)$ at $y$ is equal to $y$ itself, by the tangential divergence theorem
(see for instance \cite[Theorem 11.8]{Maggi}) 
we get (notice that the classical mean curvature of $\pa B_1(\zero)$ is $n-1$)
\begin{align*}
H_s^\Om(p)&=\frac{1}{s\,(n-1)\,\om_{n-2}}\int_{\pa B_1(\zero)} \Div_T\biggl(\frac{v_q(y)}{|v_q(y)|^{n+2s}} \biggr)\Bigl[ \bigl(1+f(y)\bigr)^{n-1}- \bigl(1+f(q)\bigr)^{n-1}\Bigr]\,d\H^{n-1}_y\\
&+\frac{1}{s\,\om_{n-2}}\int_{\pa B_1(\zero)} \frac{v_q(y)\cdot y}{|v_q(y)|^{n+2s}}\,\bigl(1+f(q)\bigr)^{n-1}\,d\H^{n-1}_y.
\end{align*}
Since $\Div_T(y)=n-1$ and $\n_Tf(y)\cdot y=0$ we have
$$
\Div_T\biggl(\frac{v_q(y)}{|v_q(y)|^{n+2s}} \biggr)=
\frac{(n-1)\bigl(1+f(y)\bigr)}{|v_q(y)|^{n+2s}}
-(n+2s)\frac{v_q(y)\cdot \n_T|v_q(y)| }{|v_q(y)|^{n+2s+1}}.
$$
So, computing
$$
\n_T v_q(y)= \bigl(1+f(y)\bigr)\,\n_T y+\n_Tf(y)\otimes y
$$
and denoting by $\pi_y:\R^n\to \R^n$ the orthogonal projection onto $y^\perp$, we get
\begin{align*}
v_q(y)\cdot \n_T |v_q(y)|&=\frac{v_q(y)\cdot \n_T v_q(y)\cdot v_q(y)}{|v_q(y)|}\\
&=\frac{\bigl(1+f(y)\bigr)\,|\pi_yv_q(y)|^2-\bigl(1+f(q)\bigr)\,\bigl((q-y)\cdot \n_Tf(y)\bigr)\,\bigl(v_q(y)\cdot y\bigr)}{|v_q(y)|}.
\end{align*}
Thanks to the elementary identity 
\begin{equation}
\label{eq:pitagora}
(y-q)\cdot y=1-q\cdot y=\frac12 |y-q|^2
\end{equation}
we see that
$$
|\pi_yv_q(y)|^2=\bigl(1+f(q)\bigr)^2|\pi_yq|^2=\bigl(1+f(q)\bigr)^2\bigl(1+y\cdot q\bigr)\,\frac{|y-q|^2}{2},
$$
and
$$
v_q(y)\cdot y=f(y)-f(q)+\bigl(1+f(q)\bigr)\,\frac{|y-q|^2}{2}.
$$
Hence, 
setting for simplicity
$$
\Gamma_f(y,q)=\bigl(1+f(y)\bigr)^{n-1}- \bigl(1+f(q)\bigr)^{n-1} \,,
$$
and combining all these formulas, we obtain
\begin{align*}
H_s^\Om(p)&=\frac{1}{s\,\om_{n-2}}\int_{\pa B_1(\zero)} \frac{1+f(y)}{|v_q(y)|^{n+2s}} \,\Gamma_f(y,q)\,d\H^{n-1}_y\\
&-\frac{n+2s}{2s\,(n-1)\,\om_{n-2}}\int_{\pa B_1(\zero)} \frac{\bigl(1+f(y)\bigr)\,(1+y\cdot q)\,\bigl(1+f(q)\bigr)^2\,|y-q|^2}{|v_q(y)|^{n+2s+2}} \,\Gamma_f(y,q)\,d\H^{n-1}_y\\
&+\frac{n+2s}{s\,(n-1)\,\om_{n-2}}\int_{\pa B_1(\zero)} \frac{\bigl(1+f(q)\bigr)\,\bigl((q-y)\cdot \n_Tf(y)\bigr) \,\bigl(f(y)-f(q)\bigr)}{|v_q(y)|^{n+2s+2}} \,\Gamma_f(y,q)\,d\H^{n-1}_y\\
&+\frac{n+2s}{2s\,(n-1)\,\om_{n-2}}\int_{\pa B_1(\zero)} \frac{\bigl(1+f(q)\bigr)^2\,\bigl((q-y)\cdot \n_Tf(y)\bigr)\,|y-q|^2}{|v_q(y)|^{n+2s+2}} \,\Gamma_f(y,q)\,d\H^{n-1}_y\\
&+\frac{1}{s\,\om_{n-2}}\,\bigl(1+f(q)\bigr)^{n-1}\int_{\pa B_1(\zero)} \frac{f(y)-f(q)}{|v_q(y)|^{n+2s}}\,d\H^{n-1}_y\\
&+\frac{1}{2s\,\om_{n-2}}\,\bigl(1+f(q)\bigr)^{n}\int_{\pa B_1(\zero)} \frac{|y-q|^2}{|v_q(y)|^{n+2s}}\,d\H^{n-1}_y.
\end{align*}
Noticing that
$$
1+y\cdot q=2-\frac{1}{2}|y-q|^2,
$$
the above expression can be rewritten as
\begin{equation}
\label{eq:mean f}
\begin{split}
H_s^\Om(p)&=\frac{1}{s\,\om_{n-2}}\int_{\pa B_1(\zero)} \frac{1+f(y)}{|v_q(y)|^{n+2s}} \,\Gamma_f(y,q)\,d\H^{n-1}_y\\
&-\frac{n+2s}{s\,(n-1)\,\om_{n-2}}\int_{\pa B_1(\zero)} \frac{\bigl(1+f(y)\bigr)\,\bigl(1+f(q)\bigr)^2\,|y-q|^2}{|v_q(y)|^{n+2s+2}} \,\Gamma_f(y,q)\,d\H^{n-1}_y\\
&+\frac{n+2s}{s\,(n-1)\,\om_{n-2}}\int_{\pa B_1(\zero)} \frac{\bigl(1+f(q)\bigr)\,\bigl((q-y)\cdot \n_Tf(y)\bigr) \,\bigl(f(y)-f(q)\bigr)}{|v_q(y)|^{n+2s+2}} \,\Gamma_f(y,q)\,d\H^{n-1}_y\\
&+\frac{n+2s}{2s\,(n-1)\,\om_{n-2}}\int_{\pa B_1(\zero)} \frac{\bigl(1+f(q)\bigr)^2\,\bigl((q-y)\cdot \n_Tf(y)\bigr)\,|y-q|^2}{|v_q(y)|^{n+2s+2}} \,\Gamma_f(y,q)\,d\H^{n-1}_y\\
&+\frac{1}{s\,\om_{n-2}}\,\bigl(1+f(q)\bigr)^{n-1}\int_{\pa B_1(\zero)} \frac{f(y)-f(q)}{|v_q(y)|^{n+2s}}\,d\H^{n-1}_y\\
&+\frac{1}{2s\,\om_{n-2}}\,\bigl(1+f(q)\bigr)^{n}\int_{\pa B_1(\zero)} \frac{|y-q|^2}{|v_q(y)|^{n+2s}}\,d\H^{n-1}_y\\
&+\frac{1}{4s\,\om_{n-2}}\,\int_{\pa B_1(\zero)} \frac{\bigl(1+f(y)\bigr)\,\bigl(1+f(q)\bigr)^2\,|y-q|^4}{|v_q(y)|^{n+2s+2}} \,\Gamma_f(y,q)\,d\H^{n-1}_y.
\end{split}
\end{equation}
We now notice that, since
$$
\Gamma_f(y,q)=(n-1)[f(y)-f(q)] \Bigl(1+ P\bigl(f(y),f(q)\bigr)\Bigr)
$$
with $P(t,s)$ a polynomial of degree $n-2$ which vanishes at $t=s=0$,
the first five terms in the right hand side above can be written as
$$
-\int_{\pa B_1(\zero)} \bigl(f(y)-f(q) \bigr)\, K(y,q)\,d\H^{n-1}_y
$$
where the kernel $K(y,q)$ behaves like a $C^{1,\tau}$ perturbation of the $\frac{1+2s}{2}$-fractional Laplacian on $\R^{n-1}$: more precisely
\begin{equation}
\label{eq:K}
K(y,q)=\frac{2}{\om_{n-2}} \frac{1}{|y-q|^{(n-1)+(1+2s)}}\Bigl(1+G_f(y,q)\Bigr),
\end{equation}
where $G_f:\pa B_1(\zero)\times \pa B_1(\zero)\to \R$ is a $C^{1,\tau}$-function (depending on $f$) which satisfies
$$
\|G_f\|_{C^{1,\tau}(\pa B_1(\zero)\times \pa B_1(\zero))} \leq C\,\|f\|_{C^{2,\tau}(\pa B_1(\zero))}\quad \forall\,\tau\in[0,2s).
$$

We now subtract the value of the above expression in the right hand side of \eqref{eq:mean f} at $f=0$ (which corresponds to the case of the unit sphere) to get
\begin{equation}
\label{eq:Kf}
H_s^\Om\bigl(F(q)\bigr)-H_s^{B_1}=-\int_{\pa B_1(\zero)} \bigl(f(y)-f(q) \bigr)\, K(y,q)\,d\H^{n-1}_y+g(q)\,,
\end{equation}
where $F(q)=q+f(q)\,q$ and
\begin{align*}
g(q)&=\frac{1}{2s\,\om_{n-2}}\,\biggl(\bigl(1+f(q)\bigr)^{n}\int_{\pa B_1(\zero)} \frac{|y-q|^2}{|v_q(y)|^{n+2s}}\,d\H^{n-1}_y
-\int_{\pa B_1(\zero)} \frac{1}{|y-q|^{n+2s-2}}\,d\H^{n-1}_y\Biggr)\\
&+\frac{1}{4s\,\om_{n-2}}\,\int_{\pa B_1(\zero)} \frac{\bigl(1+f(y)\bigr)\,\bigl(1+f(q)\bigr)^2\,|y-q|^4}{|v_q(y)|^{n+2s+2}} \,\Gamma_f(y,q)\,d\H^{n-1}_y
\end{align*}
is a $C^1$ function satisfying
$$
\|g\|_{L^{\infty}(\pa B_1(\zero))} \leq C\,\|f\|_{L^{\infty}(\pa B_1(\zero))},\qquad \|g\|_{C^1(\pa B_1(\zero))} \leq C\,\|f\|_{C^1(\pa B_1(\zero))}.
$$
Since $K$ is a $C^1$ perturbation of the $\frac{1+2s}{2}$-fractional Laplacian,
applying \cite[Theorem 61]{CSi} locally in charts (using a cut-off function) we deduce that
$$
\|f\|_{C^{1,\tau}(\pa B_1(\zero))} \leq C(n,s,\tau)\Bigl(\|f\|_{L^{\infty}(\pa B_1(\zero))}+ \|g\|_{L^{\infty}(\pa B_1(\zero))} + \|H_s^\Om\circ F-H_s^{B_1}\|_{L^{\infty}(\pa B_1(\zero))}\Bigr)\qquad \forall\,\tau <2s.
$$
Also, differentiating \eqref{eq:Kf} we can apply the same result to the first derivatives of $f$ 
(see for instance \cite[Section 2.4]{BFV} for more details on how this differentiation argument works)
to get
$$
\|f\|_{C^{2,\tau}(\pa B_1(\zero))} \leq C(n,s,\tau)\Bigl(\|f\|_{C^1(\pa B_1(\zero))}+ \|g\|_{C^1(\pa B_1(\zero))}+ \|H_s^\Om\circ F-H_s^{B_1}\|_{C^1(\pa B_1(\zero))}\Bigr)\qquad \forall\,\tau <2s.
$$
Notice now that by \eqref{eq:mean constant}, the definition of $\delta_s(\Om)$, and the fact that $\|F\|_{C^1(\pa B_1(\zero))} \leq C$, we have
$$
 \|H_s^\Om\circ F-H_s^{B_1}\|_{C^1(\pa B_1(\zero))} \leq C\,\delta_s(\Om)  \,.
$$
Hence combining all these estimates and recalling  \eqref{eq:infty f}, we  conclude that
$$
\|f\|_{C^{2,\tau}(\pa B_1(\zero))} \leq C(n,s,\tau)\Bigl(\delta_s(\Om)+\|f\|_{C^0(\pa B_1(\zero))}\Bigr)
\leq C(n,s,\tau)\,{\eta_s(\Om)}
\qquad \forall\,\tau <2s,
$$
as desired.

\end{proof}

%
%
%


\begin{thebibliography}{99}

\bibitem{AV}
N. Abatangelo, E. Valdinoci.
A notion of nonlocal curvature.
{\em Numer. Funct. Anal. Optim.} 35 (2014), no. 7-9, 793-815.

\bibitem{ADM}
L. Ambrosio, G. De Philippis, L. Martinazzi.
Gamma-convergence of nonlocal perimeter functionals. 
{\em Manuscripta Math.} 134 (2011), no. 3-4, 377-403.

\bibitem{ABR} A. Aftalion, J. Busca, W. Reichel.
Approximate radial symmetry for overdetermined boundary value problems,
{\em Adv. Diff. Eq.}  4 (1999), no. 6, 907-932.

\bibitem{A} A.D. Aleksandrov. Uniqueness theorems for surfaces
in the large V, {\em Vestnik Leningrad Univ.} 13 (1958), no. 19 , 5-8.
(English translation: {\em Amer. Math. Soc. Translations}, Ser. 2, 21 (1962),
412-415.)
%

\bibitem{BDC}
J.L. Barbosa, M. do Carmo.
Stability of Hypersurfaces of constant mean curvature.
{\em Math. Zeit.} 185 (1984), 339-353.

\bibitem{BFV}
B. Barrios Barrera, A. Figalli, E. Valdinoci.
Bootstrap regularity for integro-differential operators, and its application to nonlocal minimal surfaces.
{\em Ann. Scuola Norm. Sup. Pisa Cl. Sci.}, 13 (2014). no. 3, 609-639.

\bibitem{BMS}
B. Barrios Barrera, L. Montoro, B. Sciunzi.
On the moving plane method for nonlocal problems in bounded domains.
Preprint, 2014


\bibitem{B}
A. Butscher. A gluing construction for prescribed mean curvature. {\em Pacific J. Math.},
 249, (2011), 257-269.

\bibitem{CFSW}
X. Cabr\'e, M. Fall, J. Sola-Morales,
T. Weth.
Curves and surfaces with constant nonlocal mean curvature:
meeting Alexandrov and Delaunay.
Preprint, 2015.

\bibitem{CRS}
L. Caffarelli, J.-M. Roquejoffre, O. Savin.
Nonlocal minimal surfaces.
{\em Comm. Pure Appl. Math.} 63 (2010), no. 9, 1111-1144.

\bibitem{CSi}
L. Caffarelli, L. Silvestre. Regularity results for nonlocal equations by approximation. {\em Arch. Ration. Mech. Anal.} 200 (2011), no. 1, 59-88.

\bibitem{CS}
L. Caffarelli, P. Souganidis. A rate of convergence for monotone finite difference approximations to fully nonlinear, uniformly elliptic PDEs. {\em Comm. Pure Appl. Math.} 61 (2008), no. 1, 1-17.


\bibitem{CV}
L. Caffarelli, E. Valdinoci. Uniform estimates and limiting arguments for nonlocal minimal surfaces. {\em Calc. Var. Partial Differential Equations} 41 (2011), no. 1-2, 203-240.


\bibitem{CG}
C. Caputo, N. Guillen.
Regularity for non-local almost minimal boundaries and applications.
Preprint, 2011.




\bibitem{CM}
G. Ciraolo, F. Maggi. On the shape of compact hypersurfaces with almost constant mean curvature. Preprint, 2015.

\bibitem{CMS2} G. Ciraolo, R. Magnanini, S. Sakaguchi. Solutions of elliptic equations with a level surface parallel to the boundary: stability of the radial configuration. {\em J. Analyse Math}, to appear. 

\bibitem{CMV2}  G. Ciraolo, R. Magnanini, V. Vespri. Symmetry and linear stability in Serrin's overdetermined problem via the stability of the parallel surface problem. Preprint, 2015.

\bibitem{CV}
G. Ciraolo, L. Vezzoni. A pinching theorem for hypersurfaces in the Euclidean space. Preprint, 2015. 


\bibitem{DDDV}
J. D\'avila, M. del Pino, S. Dipierro, E. Valdinoci.
Nonlocal Delaunay surfaces.
Preprint, 2015

\bibitem{DDW}
J. D\'avila, M. del Pino, J. Wei.
Nonlocal $s$-minimal surfaces and Lawson cones



\bibitem{F2M3}
A. Figalli, N. Fusco, F. Maggi, V. Millot, M. Morini.
Isoperimetry and stability properties of balls with respect to nonlocal energies.
{\em Comm. Math. Phys.}, to appear.

\bibitem{FV}
A. Figalli, E. Valdinoci.
Regularity and Bernstein-type results
 for nonlocal minimal surfaces.
 {\em J. Reine Angew. Math.}, to appear.
 

\bibitem{Maggi}
F. Maggi.
{\em Sets of finite perimeter and geometric variational problems. 
An introduction to geometric measure theory.} Cambridge Studies in Advanced Mathematics, 135. Cambridge University Press, Cambridge, 2012. xx+454 pp.

\bibitem{Rei}
W. Reichel.
Characterization of balls by Riesz-potentials. 
{\em Ann. Mat. Pura Appl. (4)} 188 (2009), no. 2, 235-245. 

\bibitem{SV}
O. Savin, E. Valdinoci. Regularity of nonlocal minimal cones in dimension 2. {\em Calc. Var. Partial Differential Equations} 48 (2013), no. 1-2, 33-39.


\end{thebibliography}
\end{document}